\newtheorem{thm}{Theorem}[section]
\newtheorem{lem}[thm]{Lemma}
\newtheorem{prop}[thm]{Proposition}
\newcommand{\BS}{\mathbb S}
\newcommand{\BZ}{\mathbb Z}
\newcommand{\BF}{\mathbb F}
\DeclareMathOperator{\Sym}{Sym}
\DeclareMathOperator{\Mod}{Mod}
\DeclareMathOperator{\dist}{dist}
\DeclareMathOperator{\Homeo}{Homeo}
\DeclareMathOperator{\Aut}{Aut}
\DeclareMathOperator{\st}{st}
\DeclareMathOperator{\sth}{\widehat{st}}
\def\id{\mathrm{id}}
\newcommand{\mathsym}[1]{{}}
\newcommand{\unicode}[1]{{}}
\begin{document}

\title{Finite rigid subgraphs of the pants graphs of punctured spheres} 

\author{Rasimate Maungchang}

\maketitle

\begin{abstract}
We prove a strong form of finite rigidity for pants graphs of spheres. Specifically, for any $n\geq 4$, we construct a finite subgraph $X_n$ of the pants graph $\mathcal{P}(S_{0,n})$ of the $n$-punctured sphere $S_{0,n}$ with the following property. Any simplicial embedding of $X_n$ into any pants graph $\mathcal{P}(S_{0,m})$ of a punctured sphere is induced by an embedding $S_{0,n}\to S_{0,m}$.
\end{abstract}
\section{Introduction}
\label{sec:Introduction}
Let $S=S_{g,n}$ be an orientable surface of genus $g$ with $n$ punctures. The pants graph $\mathcal{P}(S)$ of $S$ has vertices corresponding to  pants decompositions of $S$ and edges corresponding to elementary moves (see Section~\ref{sec:background} for details).

Margalit \cite{Mar} proved that, for most surfaces $S$, $\Aut(\mathcal{P}(S))\cong \Mod^{\pm}(S)$, where $\Mod^{\pm}(S)=\pi_0(\Homeo(S))$ is the extended mapping class group.
This result was extended by Aramayona \cite{Aramayona} who proved that, for any two surfaces $S$ and $S'$ such that the complexity of $S$ is at least $2$, every injective simplicial map $\phi:\mathcal{P}(S)\to \mathcal{P}(S')$ is induced by a $\pi_1$-injective embedding $f:S\to S'$. For related results on curve complexes see ~\cite{Ivanov,Korkmaz,Luo,SK,AL}.

$\mathcal{P}(S)$ is an infinite and locally infinite graph. In this paper, we refine Aramayona's result (for punctured spheres) and prove the following.
\begin{thm}
\label{T:main theorem}
For $n\geq 4$, there exists a finite subgraph $X_n \subset \mathcal{P}(S_{0,n})$ such that for any punctured sphere $S_{0,m}$ and any injective simplicial map 
\[
\phi:X_n \to \mathcal{P}(S_{0,m}),
\]
there exists a $\pi_1$-injective embedding $f:S_{0,n} \to S_{0,m}$ that induces $\phi$. 

For $n=4$, the isotopy class of $f$ is unique up to precomposing by an element $\sigma\in\Mod(S_{0,4})$ inducing the identity on $\mathcal{P}(S_{0,4})$.

For $n\geq 5$, the isotopy class of $f$ is unique.
\end{thm}
We say that $f$ \textbf{induces} $\phi$ if there is a deficiency-$(n-3)$ multicurve $Q$ on $S_{0,m}$ with the following property  (see Section~\ref{sec:background} for definitions). The image $f(S_{0,n})$ is the unique non-pants component $(S_{0,m}-Q)_0\subset S_{0,m}-Q$ and the simplicial map  
\[
f^{Q}:\mathcal{P}(S_{0,n}) \to \mathcal{P}(S_{0,m}), 
\]
defined by
$
f^{Q}(u)=f(u)\cup Q
$
satisfies $f^{Q}(u)=\phi(u)$ for any vertex $u\in X_n$.

This result is analogous to Aramayona-Leininger~\cite{AL} for the case of the curve complex when $n=m$ (that theorem applies to arbitrary surfaces).

One of the main difficulties in proving Theorem~\ref{T:main theorem} is to construct the finite subgraphs $X_n$. To do this we can look for a candidate subgraph which, under additional hypotheses on the simplicial map, allows us to construct the embedding of the surface. For example, we have the condition on the simplicial map of $Z_5$ in Lemma~\ref{lem:pentagon}. We then need to enlarge the candidate subgraph so that those extra conditions are encoded in the enlarged subgraph. But then another problem might arise which is that the induced map of the embedding that works on the original candidate subgraph might not control the added parts in the enlarged subgraph. For example, a simplicial embedding of the thick pentagon $\widehat{Z_5}$ ensures that the simplicial map restricted to $Z_5$ satisfies Lemma~\ref{lem:pentagon} and hence there is a candidate embedding of $S_{0,5}$. But the induced map may not agree with the simplicial map on $\widehat{Z_5}-Z_5$. Then we have to enlarge the subgraph further which might cause more problems.

It seems likely that Theorem~\ref{T:main theorem} should be true for essentially any surface $S$. However it is unclear how to choose a subgraph $X\subset P(S)$.

{\bf Outline of the paper.} Section~\ref{sec:background} contains the relevant background and definitions. This section also contains the proof of the theorem in the case of $n=4$.
In Section~\ref{sec:X_n}, we describe the finite subgraph $X_n$ and prove some important properties of the subgraph. 
We prove the theorem for the case of $n=5$ in Section~\ref{sec:base case} and for the general case in Section~\ref{sec:general case}.

{\bf Acknowledgments.}
I would like to thank my advisor Christopher J. Leininger for guidance and useful conversations. I also would like to thank Javier Aramayona, Dan Margalit, and the referees for their helpful comments.
\section{Background and definitions}
\label{sec:background}
Here we describe some of the background material. See \cite{Aramayona} and \cite{Mar} for more details. Let $S=S_{g,n}$ be an orientable surface of genus $g$ with $n$ punctures. A simple closed curve on $S$ is \textbf{essential} if it does not bound a disk or a once-punctured disk on $S$. In this paper, a \textbf{curve} is  a homotopy class of essential simple closed curves on $S$.

Let $\gamma$ and $\gamma'$ be curves on $S$. The \textbf{geometric intersection number} of $\gamma$ and $\gamma'$ is defined as the minimum number of transverse intersection points among the simple representatives of $\gamma$ and $\gamma'$.

The intersection of any two curves mentioned in this paper refers to their geometric intersection number. Whenever we represent homotopy classes $\gamma$ and $\gamma'$ by simple closed curves, we assume these intersect in $i(\gamma,\gamma')$  points and we will not distinguish a homotopy class from its representatives. Two curves $\gamma$ and $\gamma'$ are \textbf{disjoint} if $i(\gamma,\gamma')=0$.   Let $A$ be a set of curves on $S$. We say that $\bigcup_{\alpha\in A}\alpha$ \textbf{fills} $S$ if the complement is a disjoint union of disks or once-punctured disks.

We call a surface which is homeomorphic to $S_{0,3}$, \textbf{a pair of pants}. 
Let $A$ be a set of pairwise disjoint curves on $S$. The \textbf{nontrivial component(s)} of the complement of the curves in $A$, denoted $(S-A)_0$, is the union of the non-pants components of the complement.

A \textbf{multicurve} $Q$ is a set of pairwise disjoint curves on $S$. Let $Q_1$ and $Q_2$ be multicurves. The intersection number of $Q_1$ and $Q_2$  is defined to be $i(Q_1,Q_2)=\sum i(\alpha_1,\alpha_2)$, where $\alpha_1$ and $\alpha_2$ are curves in $Q_1$ and $Q_2$, respectively. Observe that $Q_1\cup Q_2$ is a multicurve if and only if $i(Q_1,Q_2)=0$. 

A \textbf{pants decomposition} $P$ is a muticurve whose complement in $S$ is a disjoint union of pairs of pants. Equivalently, a pants decomposition is a maximal set of pairwise disjoint curves on $S$, that is, a maximal multicurve. A pants decomposition always contains $3g+n-3$ curves and we call this number the \textbf{complexity} $\kappa(S)$ of $S$. The \textbf{deficiency} of a multicurve $Q$ is the number $\kappa(S)-|Q|$. If $Q$ is a deficiency-$1$ multicurve then $(S-Q)_0$ is homeomorphic to either $S_{0,4}$ or $S_{1,1}$.

Let $P$ and $P'$ be pants decompositions of $S$. $P$ and $P'$ differ by an  \textbf{elementary move} if there are curves $\alpha, \alpha'$ on $S$ and a deficiency-$1$ multicurve $Q$ such that $P=\{\alpha\}\cup Q, P'=\{\alpha'\}\cup Q$ and $i(\alpha,\alpha')=2$ if $(S-Q)_0\cong S_{0,4}$ or $i(\alpha,\alpha')=1$ if $(S-Q)_0\cong S_{1,1}$; see Figure~\ref{F:elementary} for examples of elementary moves.
\begin{figure}[ht]
\begin{center}
\includegraphics[height=5cm]{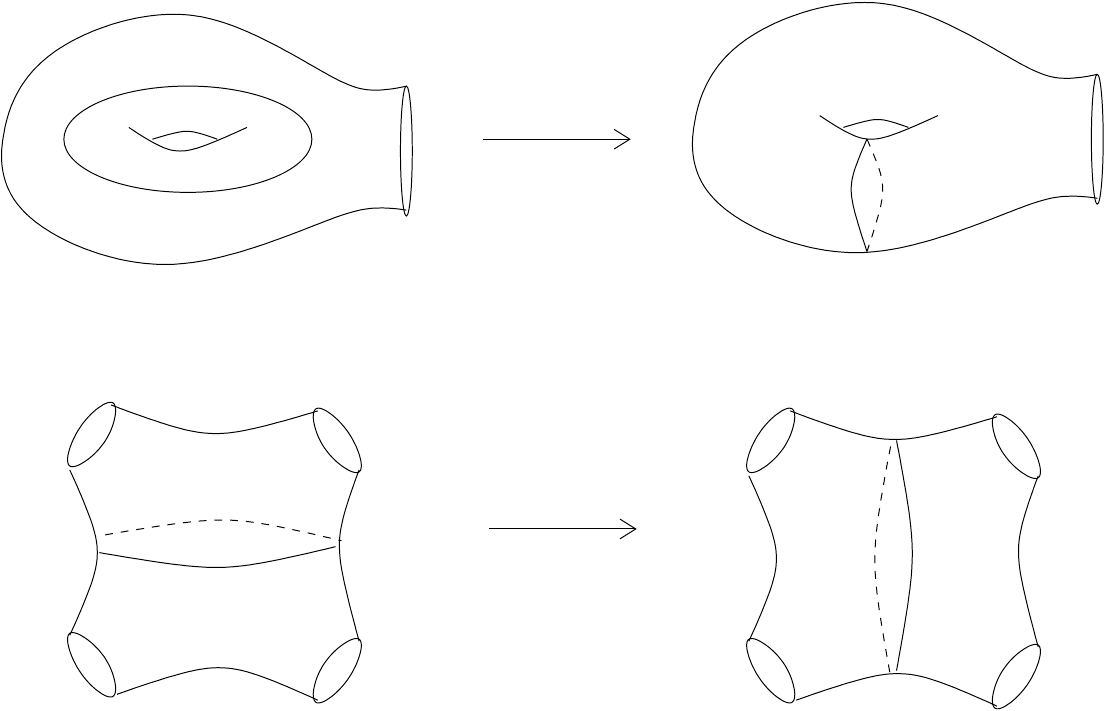} 
\caption{Two types of elementary moves.} 
\label{F:elementary}
\end{center}
\end{figure}

The \textbf{pants graph} $\mathcal{P}(S_{g,n})$  of $S_{g,n}$ is a graph with the set of vertices 
\[
V_{\mathcal{P}(S_{g,n})}=\{P\mid P~\mbox{is a pants decomposition of}~S_{g,n}\},
\] 
and the set of edges 
\[
E_{\mathcal{P}(S_{g,n})}=\{\{P,P'\}\subset V_{\mathcal{P}(S_{g,n})}\mid P, P'~\mbox{differ by an elementary move}\}.
\] 
The pants graph $\mathcal{P}(S_{g,n})$ is connected, see\cite{HT}, which we view as a geodesic metric space by requiring each edge to have length $1$. 

Given a multicurve $Q$, we let $\mathcal{P}_Q(S_{g,n})$ be the subgraph of $\mathcal{P}(S_{g,n})$ \textbf{induced} by the vertex set 
\[
V_{\mathcal{P}_Q(S_{g,n})}=\{P\in V_{\mathcal{P}(S_{g,n})}\mid Q\subset P\},
\]
that is, the largest subgraph with $V_{\mathcal{P}_Q(S_{g,n})}$ as its vertex set. When the deficiency of $Q$ is positive, it is easy to see that $\mathcal{P}_Q(S_{g,n})\cong \mathcal{P}((S_{g,n}-Q)_0)$. Let $Q_1$ and $Q_2$ be multicurves. We observe that $\mathcal{P}_{Q_1}(S_{g,n})\cap \mathcal{P}_{Q_2}(S_{g,n})\neq\emptyset$ if and only if $Q_1\cup Q_2$ is a multicurve if and only if $i(Q_1,Q_2)=0$, and in this case, $\mathcal{P}_{Q_1}(S_{g,n})\cap \mathcal{P}_{Q_2}(S_{g,n})=\mathcal{P}_{Q_1\cup Q_2}(S_{g,n})$. 

A \textbf{Farey graph} $\BF$  is a graph isomorphic to the standard Farey graph which has vertices 
\[
V=\left\{\frac{p}{q}\mid p,q\in\BZ,\frac{p}{q}~\mbox{is in lowest term}\right\}\cup\left\{\frac{1}{0}=\infty\right\},
\] 
and edges 
\[
E_{\BF}=\left\{\left\{\frac{p}{q},\frac{s}{t}\right\}\subset V\mid |pt-qs|=1\right\}.
\] 

See Figure~\ref{F:twist} for a picture of a part of the Farey graph. 

The following Lemma combines the results from (\cite{Minsky}, Section $3$), and (\cite{Mar}, Lemma 2).
\begin{lem}
\label{lem:Farey}
Let $F$ be a subgraph of $\mathcal{P}(S_{g,n})$. Then $F$ is isomorphic to a Farey graph if and only if there is a deficiency-$1$ multicurve $Q$ such that $F=\mathcal{P}_Q(S_{g,n})$.
\end{lem}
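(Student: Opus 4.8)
The plan is to prove the two implications separately; the forward implication is a direct unwinding of definitions, while the converse is the substantial part.

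For the implication that $F=\mathcal{P}_Q(S_{g,n})$ with $Q$ of deficiency $1$ is isomorphic to a Farey graph, I would use the fact recorded above that $\mathcal{P}_Q(S_{g,n})\cong\mathcal{P}((S_{g,n}-Q)_0)$ together with the dichotomy $(S_{g,n}-Q)_0\cong S_{0,4}$ or $S_{1,1}$. It therefore suffices to exhibit isomorphisms $\mathcal{P}(S_{0,4})\cong\BF$ and $\mathcal{P}(S_{1,1})\cong\BF$, which is Minsky's computation. In either case the complexity is $1$, so a pants decomposition is a single curve and a vertex of the pants graph is just an isotopy class of essential curve; these classes are parametrized by slopes $p/q\in\BQ\cup\{\infty\}$. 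I would then check that the minimal intersection number between the slope-$\tfrac{p}{q}$ and slope-$\tfrac{s}{t}$ curves equals $1$ on $S_{1,1}$ and $2$ on $S_{0,4}$ exactly when $|pt-qs|=1$. Since those are precisely the intersection conditions defining an elementary move on $S_{1,1}$ and $S_{0,4}$ respectively, the edges of $\mathcal{P}(S_{0,4})$ and $\mathcal{P}(S_{1,1})$ match the Farey edges, giving the desired isomorphisms.

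For the converse, suppose $F\cong\BF$. I would fix an edge $e=\{P,P'\}$ of $F$ and set $Q=P\cap P'$; since $P,P'$ differ by an elementary move, $Q$ is a deficiency-$1$ multicurve, and by the first part $\mathcal{P}_Q(S_{g,n})$ is a Farey subgraph containing $e$. The goal is to prove $F=\mathcal{P}_Q(S_{g,n})$. The two structural features of $\BF$ I would exploit are: every edge lies in exactly two triangles (for instance $0$ and $\infty$ have exactly the two common neighbors $1$ and $-1$), and the triangles, under the relation "share an edge," form a connected graph (the dual trivalent tree of the Farey tessellation). Both features are inherited by $F$.

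The engine of the proof is the classification of triangles in $\mathcal{P}(S_{g,n})$: every triangle $\{R_1,R_2,R_3\}$ is contained in $\mathcal{P}_{Q'}(S_{g,n})$ for a deficiency-$1$ multicurve $Q'\subseteq R_1\cap R_2\cap R_3$; equivalently, there are no "non-standard" triangles whose three vertices fail to share a common deficiency-$1$ multicurve. This is the main obstacle and the geometric heart of the argument (Margalit's Lemma~2, built on Minsky's local analysis). Granting it, I would take a triangle $T_0\subseteq F$ containing $e$: its common multicurve refines $P\cap P'=Q$ and, having deficiency $1$, must equal $Q$, so $T_0\subseteq\mathcal{P}_Q(S_{g,n})$. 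Propagating across the connected triangle-adjacency graph, any triangle sharing an edge $e'$ with a triangle already known to lie in $\mathcal{P}_Q(S_{g,n})$ has $e'$ with endpoint-intersection $Q$, hence common multicurve $Q$, hence also lies in $\mathcal{P}_Q(S_{g,n})$; by connectivity every triangle of $F$ lies in $\mathcal{P}_Q(S_{g,n})$, so $F\subseteq\mathcal{P}_Q(S_{g,n})$. Finally I would upgrade this inclusion to equality: $\mathcal{P}_Q(S_{g,n})\cong\BF$ also has exactly two triangles on $e$, and the two triangles of $F$ on $e$ are among them, so they coincide; propagating once more over the triangle-tree shows $F$ contains every triangle of $\mathcal{P}_Q(S_{g,n})$, whence $F=\mathcal{P}_Q(S_{g,n})$. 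The step I expect to be hardest is the triangle classification, since ruling out non-standard triangles is exactly where the combinatorics of elementary moves must be controlled.
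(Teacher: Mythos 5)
Your proof is correct, and it follows essentially the same route as the paper, which offers no argument of its own for this lemma but cites exactly the two ingredients you use: Minsky's slope parametrization of $\mathcal{P}(S_{0,4})$ and $\mathcal{P}(S_{1,1})$ for the forward direction, and Margalit's analysis for the converse. The one step you defer --- that the three vertices of any triangle in $\mathcal{P}(S_{g,n})$ share a common deficiency-$1$ multicurve, after which your propagation over the dual trivalent tree of the Farey tessellation is sound --- is precisely Margalit's Lemma~2, so invoking it matches the level of detail at which the paper itself treats the statement.
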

Note that as a consequence of Lemma~\ref{lem:Farey}, each edge $e$ of $\mathcal{P}(S_{g,n})$ is contained in a unique Farey graph $\mathcal{P}_Q(S_{g,n})$ where $Q=P\cap P^{\prime}$ is the deficiency-$1$ multicurve given by the intersection of its endpoints $P$ and $P^{\prime}$. 

We also see from the Lemma that $\mathcal{P}(S_{0,4}
)$ and $\mathcal{P}(S_{1,1})$ are isomorphic to a Farey graph. Let $\alpha$ and $\beta$ be two curves on $S_{0,4}$ such that $i(\alpha,\beta)=2$. Then, as pants decompositions, $\alpha$ and $\beta$ differ by an elementary move, i.e., they are two adjacent vertices in $\mathcal{P}(S_{0,4})\cong\BF$. Up to a homeomorphism on $S_{0,4}$, $\alpha$ and $\beta$ are the curves on $S_{0,4}$ shown in Figure~\ref{F:twist}. Let $T^{\frac{1}{2}}_c$ be the half-twist around a curve $c$ on $S_{0,4}$. Then we can see that 
\[
T^{\frac{1}{2}}_{\beta}(\alpha)=T^{-\frac{1}{2}}_{\alpha}(\beta),
\]
and, together with $\alpha, \beta$, these three vertices form a triangle in $\mathcal{P}(S_{0,4})$; see Figure~\ref{F:twist}. 
\begin{figure}[ht]
\begin{center}
\includegraphics[height=7cm]{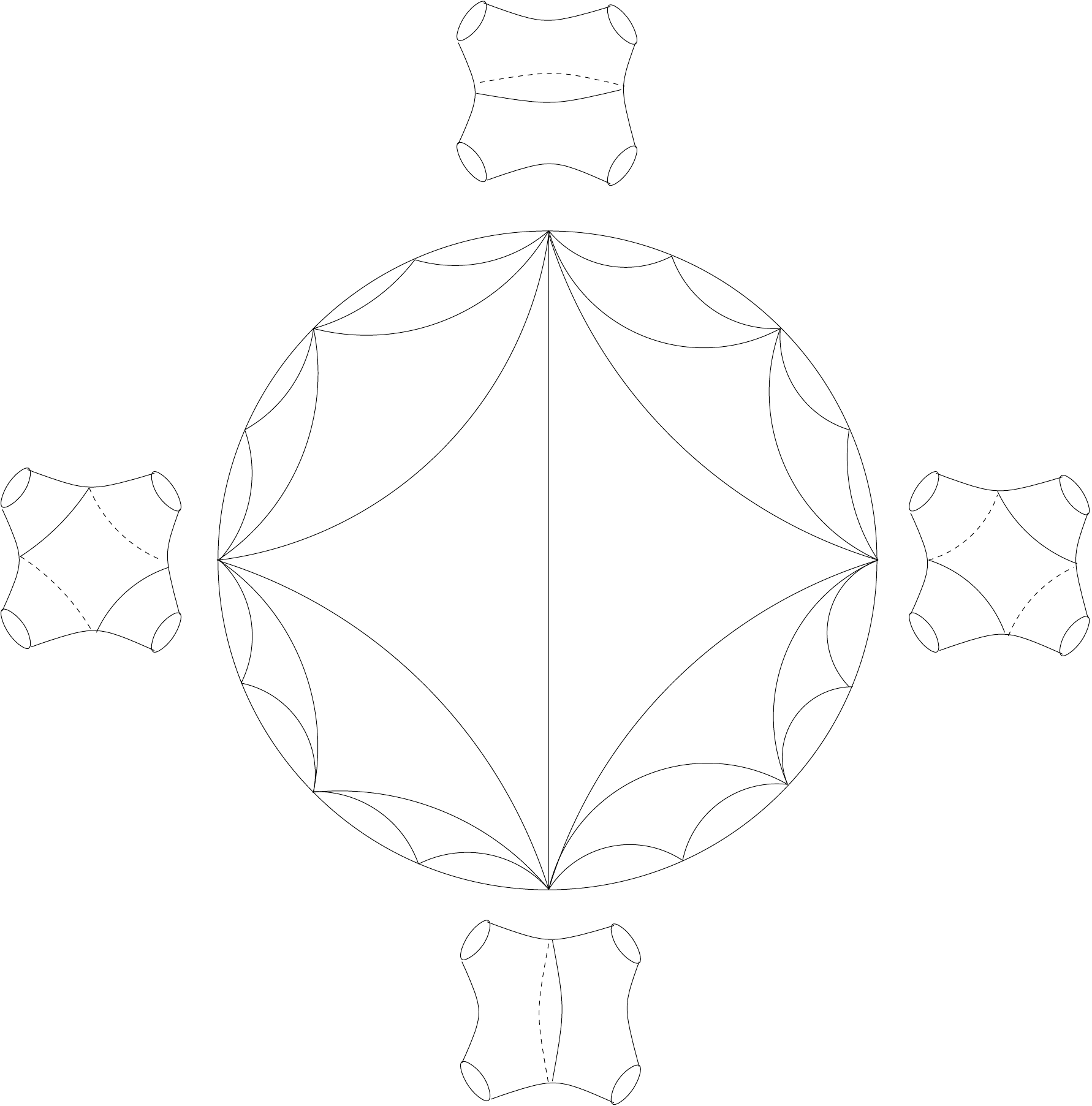} 
\caption{$\mathcal{P}(S_{0,4})$ and some curves representing its vertices.} 
\label{F:twist}
\end{center}
\begin{picture}(0,0)(0,0)

\put(215,230){\small $\alpha$}
\put(215,70){\small $\beta$}
\put(300,150){\small $T^{\frac{1}{2}}_{\beta}(\alpha)=T^{-\frac{1}{2}}_{\alpha}(\beta)$}
\put(10,150){\small $T^{\frac{1}{2}}_{\alpha}(\beta)=T^{-\frac{1}{2}}_{\beta}(\alpha)$}

\end{picture}

\end{figure}
\begin{figure}[ht]
\begin{center}
\includegraphics[height=3.5cm]{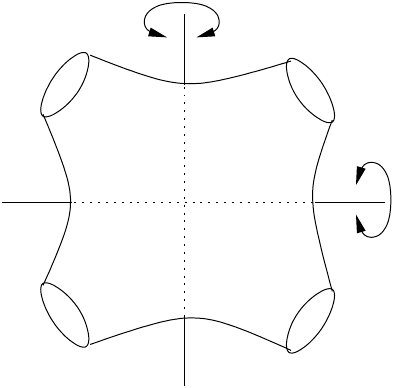} 
\caption{Two hyperelliptic involutions $\sigma$ and $\tau$ of $S_{0,4}$.} 
\label{F:hyperelliptic}
\end{center}
\begin{picture}(0,0)(0,0)

\put(190,160){$\tau$}
\put(250,100){$\sigma$}

\end{picture}

\end{figure}

Let $X_4$ be the triangle spanned by $\alpha, \beta$ and $T^{\frac{1}{2}}_{\alpha}(\beta)$. Let $\sigma$ and $\tau$ be two hyperelliptic involutions of $S_{0,4}$, see Figure~\ref{F:hyperelliptic}. Let $K=<\sigma,\tau>\cong\BZ/2\BZ\times\BZ/2\BZ<\Mod(S_{0,4})$. Note that $K$ is the subgroup of $\Mod(S_{0,4})$ containing all elements that induce identity on $\mathcal{P}(S_{0,m})$.

The case of $n=4$ in Theorem~\ref{T:main theorem} can be proved using Lemma~\ref{lem:Farey}.
\begin{prop}
\label{prop:n=4}
Given any injective simplicial map $\phi:X_4\to \mathcal{P}(S_{0,m})$. There exists a $\pi_1$-injective embedding $f:S_{0,4}\to S_{0,m}$ that induces $\phi$ and $f$ is unique up to precomposing with an element in $K$.
\end{prop}
By a \textbf{path} in $\mathcal{P}(S_{g,n})$, we always mean an edge path determined by a sequence of distinct adjacent vertices $v_1,...,v_m$ of $\mathcal{P}(S_{g,n})$.
A \textbf{cycle} in $\mathcal{P}(S_{g,n})$ is a subgraph homeomorphic to a circle. A cycle is called a \textbf{triangle}, \textbf{rectangle}, \textbf{pentagon} if it has $3, 4$ or $5$ vertices, respectively. 

A cycle $v_1,...,v_m=v_1$ is called an \textbf{alternating cycle} if any two consecutive edges are in different Farey graphs. See Figure~\ref{F:pentagon} for an example of an alternating pentagon in $\mathcal{P}(S_{0,5})$. The following is proved in \cite[Lemma 4.2]{Luo}. We note that for a simplicial map of a pentagon, however, any locally injective map is injective.
\begin{figure}[ht]
\begin{center}
\includegraphics[height=7cm]{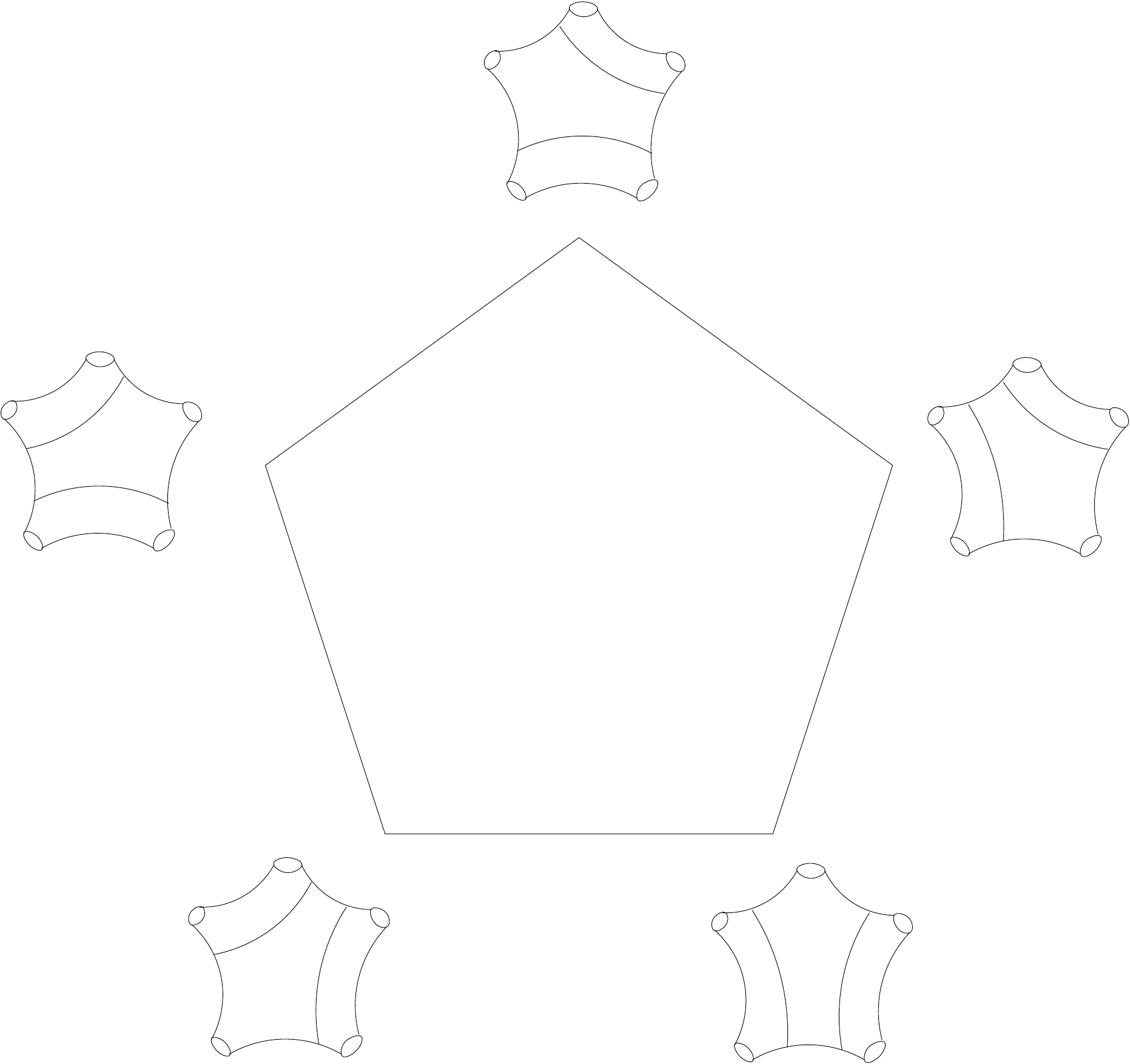} 
\caption{An alternating pentagon in $\mathcal{P}(S_{0,5})$.} 
\label{F:pentagon}
\end{center}
\end{figure}
\begin{lem}
\label{lem:pentagon}
Let $X\subset \mathcal{P}(S_{0,5})$ be the pentagon shown in Figure~\ref{F:pentagon} and let $\phi:X\to \mathcal{P}(S_{0,m})$ be a locally injective simplicial map. If $\phi(X)$ is an alternating pentagon, then there exists a deficiency-$2$ multicurve $Q$ and a homeomorphism $f:S_{0,5} \to (S_{0,m}-Q)_0$ to a component of $S_{0,m}-Q$ such that $\phi|_{X} = f^Q|_{X}$.  
\end{lem}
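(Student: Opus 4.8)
The plan is to reconstruct the subsurface directly from the combinatorics of the image pentagon. Write $R_i=\phi(P_i)$ for the five vertices of $\phi(X)$, indexed cyclically, so that $R_i$ and $R_{i+1}$ are joined by an edge. By Lemma~\ref{lem:Farey} each such edge lies in a unique Farey graph $\mathcal{P}_{Q_i}(S_{0,m})$, where $Q_i=R_i\cap R_{i+1}$ is the deficiency-$1$ multicurve obtained by deleting a single curve from $R_i$; the move $R_i\to R_{i+1}$ replaces a curve $\alpha_i\in R_i$ by a curve $\beta_i\in R_{i+1}$ with $i(\alpha_i,\beta_i)=2$ inside the four-holed sphere $(S_{0,m}-Q_i)_0$. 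The alternating hypothesis says $Q_i\neq Q_{i+1}$, equivalently $\beta_i\neq \alpha_{i+1}$ for every $i$.

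First I would show that the whole pentagon lies inside a single subgraph $\mathcal{P}_Q$ with $Q$ of deficiency $2$. Set $Q=\bigcap_i R_i$. Deleting one further curve from $Q_i$, the deficiency-$2$ multicurves $M_i:=Q_{i-1}\cap Q_i$ all contain $Q$, so the deficiency of $Q$ is at least $2$. For the reverse inequality I would track the life of each curve around the $5$-cycle: each changing curve (one not in $Q$) is introduced by one elementary move and removed by another, hence is present along a contiguous arc of vertices, and the alternating condition $\beta_i\neq\alpha_{i+1}$ forbids an arc of length $1$, so every arc has length at least $2$. A short counting argument, using $\sum_i|R_i|=5(m-3)$ together with there being exactly five add/delete events, then shows that there are exactly five changing curves and that the deficiency of $Q$ equals $2$ precisely when every arc has length exactly $2$. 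To force this, I would argue that the changing curves fill $(S_{0,m}-Q)_0$: any essential curve in this complement disjoint from all changing curves would be disjoint from every $R_i$, hence by maximality of pants decompositions would lie in $\bigcap_i R_i=Q$, a contradiction. This filling property, together with the data $i(\alpha_i,\beta_i)=2$ carried by the consecutive moves, pins down the complexity.

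With $Q$ of deficiency $2$ in hand, the isomorphism $\mathcal{P}_Q(S_{0,m})\cong\mathcal{P}((S_{0,m}-Q)_0)$ applies, and since $S_{0,m}$ has genus $0$ the non-pants component $(S_{0,m}-Q)_0$ is a complexity-$2$ planar surface, hence homeomorphic to $S_{0,5}$. The five changing curves $\beta_1,\dots,\beta_5$ now fill $(S_{0,m}-Q)_0\cong S_{0,5}$, each appearing in exactly two consecutive vertices and with consecutive curves meeting in two points, which is exactly the intersection data of the five curves defining the standard pentagon in $\mathcal{P}(S_{0,5})$. By the change-of-coordinates principle this configuration is unique up to the action of $\Mod^{\pm}(S_{0,5})$, so there is a homeomorphism $f:S_{0,5}\to(S_{0,m}-Q)_0$ carrying the standard pentagon curves to the $\beta_i$, and this $f$ satisfies $\phi|_X=f^Q|_X$, as required.

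The main obstacle is the middle step: proving that the common multicurve $Q$ has deficiency exactly $2$, equivalently that every changing curve lives along an arc of exactly two consecutive vertices (i.e.\ $\beta_{i-1}=\alpha_{i+1}$ for all $i$). This is where the full force of the hypotheses must be used, namely that we have a \emph{closed} five-cycle all of whose consecutive edges are alternating, since neither a single Farey graph nor a shorter alternating cycle produces the pentagon-shaped filling configuration characterizing $S_{0,5}$. I expect the cleanest route is the arc-length and filling count above, with the alternating condition ruling out length-$1$ arcs and the closing-up of the cycle together with the filling property ruling out arcs of length at least $3$.
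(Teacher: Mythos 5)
Your skeleton matches the standard one (this lemma is not proved in the paper itself --- it is imported from Margalit \cite[Lemma 8]{Mar}, so the comparison is against that argument): unique Farey graph per edge, a common multicurve $Q=\bigcap_i R_i$, the analysis of ``arcs'' along which each changing curve persists, and a final change-of-coordinates step. The alternating condition does rule out arcs of length $1$ exactly as you say, and your filling argument is correct as stated. But there is a genuine gap precisely at the step you yourself flag as the main obstacle: nothing in your proposal actually rules out arcs of length $3$ or $4$, i.e.\ deficiency $3$ or $4$ for $Q$. The filling property cannot do this, because it holds for \emph{every} value of the deficiency by the same maximality argument --- e.g.\ the combinatorial scenario of five arcs all of length $3$ (deficiency $3$) passes both your vertex count and your filling test. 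What kills the long arcs is a geometric fact you record but never use: at each edge the removed curve $\alpha_i$ and the added curve $\beta_i$ satisfy $i(\alpha_i,\beta_i)=2>0$, whereas any two curves appearing together in some vertex of the pentagon lie in a common pants decomposition and are therefore disjoint. Hence the arc of $\alpha_i$ (ending at $v_i$) and the arc of $\beta_i$ (beginning at $v_{i+1}$) cannot share a vertex of the $5$-cycle, which forces $\ell(\alpha_i)+\ell(\beta_i)\le 5$ for each of the five edges. Since each arc occurs exactly once as an added curve and once as a removed curve, summing over the edges gives $2\sum \ell = 10\cdot\deficiency(Q)\le 25$, so $\deficiency(Q)=2$ and all arcs have length exactly $2$. (In the all-length-$3$ scenario one checks directly that $\alpha_i$ and $\beta_i$ would coexist at a vertex, contradicting $i(\alpha_i,\beta_i)=2$.) Without some such argument the central claim of your proof is an expectation, not a proof.

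Two smaller but real gaps remain. First, you assert that $(S_{0,m}-Q)_0$ is ``a complexity-$2$ planar surface, hence homeomorphic to $S_{0,5}$,'' but a deficiency-$2$ multicurve on a punctured sphere can have \emph{two} non-pants components, each an $S_{0,4}$; this must be excluded. It can be: in that case each elementary move is supported in one of the two components, consecutive edges of an alternating pentagon must be supported in different components, and that would be a proper $2$-coloring of the edges of an odd cycle, which is impossible. Second, the closing appeal to ``the change-of-coordinates principle'' is glossed: uniqueness up to homeomorphism of the five-curve filling configuration (consecutive curves disjoint, non-consecutive pairs meeting twice) is not a one-line consequence of change of coordinates for a single multicurve --- one must build $f$ curve by curve and verify that the stabilizer of the partial configuration acts transitively on the admissible choices of the next curve (half-twists about the already-placed curves provide this), or argue directly as Margalit does. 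With the deficiency-$2$ step repaired as above and these two points filled in, your route does yield the lemma and is essentially the cited proof.
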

\section{The construction of $X_n$}
\label{sec:X_n}
In this section, we construct the finite subgraph $X_n$, for $n\geq5$.
To begin, consider $S_{0,n}$ as the double of a regular $n$-gon with vertices removed. Connect every non-adjacent pair of sides of the $n$-gon by a straight line segment and then double to obtain a curve on $S_{0,n}$. The result is $S_{0,n}$ with a set of simple closed curves $\Gamma_n$. See Figure~\ref{F:S_0_8} for the case of $S_{0,8}$ and Figure~\ref{F:S_0_5} for the case of $S_{0,5}$. 

Label the sides of the $n$-gon cyclically as $1,...,n$. In all that follows, we assume that any reference to these labels is taken modulo $n$ (thus, if $i$ is a label, so is $i+1$ and $i-1$). Given labels $i$ and $j$, write $\alpha_{i,j}$ for the curve in $\Gamma_n$ obtained from the arc connecting the $i^{th}$ side to the $j^{th}$ side. For each $i$, we call $\alpha_{i,i+2}$ \textbf{a chain curve} of $S_{0,n}$. Compare \cite[Section 3]{AL}.

Let $W\subset \Gamma_n$ be a deficiency-$m$ multicurve such that $(S_{0,n}-W)$ has only one nontrivial component $(S_{0,n}-W)_0$ homeomorphic to a sphere with $m+3$ punctures. Let $\Gamma^W_{m+3}$ be the subset of $\Gamma_n$ whose every element is disjoint from every element of $W$.
\begin{lem}
\label{lem:embed}
Let $W\subset \Gamma_n$ be a deficiency-m multicurve such that there is a nontrivial component $(S_{0,n}-W)_0$ of $(S_{0,n}-W)$ homeomorphic to a sphere with $m+3$ punctures.

Then there is a homeomorphism
\[
h: S_{0,m+3} \to (S_{0,n}-W)_0
\] 
such that $h(\Gamma_{m+3})=\Gamma^W_{m+3}$.
\end{lem}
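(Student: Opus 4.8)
The plan is to pass from the surfaces to the combinatorics of the defining polygons. Recall that $S_{0,n}$ is the double of a regular $n$-gon $D$ (with vertices deleted) along its boundary, so that each puncture of $S_{0,n}$ is a doubled vertex of $D$ and each curve $\alpha_{i,j}\in\Gamma_n$ is the double of the straight arc joining side $i$ to side $j$ of $D$. The first step is to record the dictionary that, for $\alpha_{i,j},\alpha_{k,l}\in\Gamma_n$, one has $i(\alpha_{i,j},\alpha_{k,l})=0$ if and only if the underlying arcs can be drawn disjointly in $D$, i.e.\ the pairs of sides $\{i,j\}$ and $\{k,l\}$ do not link in the cyclic order on $\{1,\dots,n\}$. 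In particular the arcs underlying $W$ are pairwise non-crossing, so they cut $D$ into a collection of subpolygonal regions $R_0,R_1,\dots,R_k$, each of which is a disk.

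Next I would translate the complement into this picture. Cutting $S_{0,n}$ along $W$ is the same as cutting $D$ along the $W$-arcs and then doubling each region along the portions of $\partial D$ that it contains, the doubled arcs becoming the new boundary curves. A region bounded by $a$ of the $W$-arcs and containing $v$ of the original vertices of $D$ therefore doubles to a genus-zero surface $S_{0,a+v}$; in particular it is a pair of pants exactly when $a+v=3$ (for a chain-curve region one has $a=1$, $v=2$). Since every component of $S_{0,n}-W$ other than $(S_{0,n}-W)_0$ is a pair of pants and $(S_{0,n}-W)_0\cong S_{0,m+3}$, there is a unique region, say $R_0$, with $a+v=m+3$, while all other regions satisfy $a+v=3$. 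Thus $(S_{0,n}-W)_0$ is the double of $R_0$, and $R_0$ carries a natural $(m+3)$-gon structure whose vertices are the original punctures of $D$ lying in $R_0$ together with the curves of $W$ on $\partial R_0$ (each contributing one puncture), and whose sides are the segments of $\partial R_0$ between consecutive vertices.

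Finally I would identify the two curve systems. An essential curve $\alpha_{i,j}\in\Gamma_n$ disjoint from $W$ cannot lie in a pants region, since pairs of pants carry no essential curves; hence every element of $\Gamma^W_{m+3}$ lies in the double of $R_0$, and its underlying arc is a diagonal of the $(m+3)$-gon $R_0$ joining two non-adjacent sides. For the converse matching I would show that each arc of $R_0$ joining two non-adjacent sides — including sides that are $W$-arcs — is, after doubling, isotopic in $(S_{0,n}-W)_0$ to some $\alpha_{k,l}$ disjoint from $W$, by pushing any endpoint lying on a $W$-side across the adjacent pants region onto an original side of $D$. This exhibits a bijection between $\Gamma^W_{m+3}$ and the full set of non-adjacent-side arcs of $R_0$, which is exactly the combinatorial data defining $(S_{0,m+3},\Gamma_{m+3})$. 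Choosing any homeomorphism from $R_0$, with its $(m+3)$-gon structure, to the standard $(m+3)$-gon that respects the cyclic labelling of the sides, and doubling it, then produces the desired homeomorphism of pairs $h$.

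I expect the main obstacle to be this last matching of the curve systems, and in particular the surjectivity direction: one must verify that every diagonal of $R_0$ with an endpoint on a $W$-side is genuinely realized by a curve of $\Gamma_n$ disjoint from $W$ (so that passing to $R_0$ neither creates new curves nor destroys surviving ones), and that the induced cyclic relabelling of the sides of $R_0$ is consistent, so that the abstract $(m+3)$-gon structure on $R_0$ truly agrees with the standard one used to define $(S_{0,m+3},\Gamma_{m+3})$.
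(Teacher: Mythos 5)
Your proposal is correct and follows essentially the same route as the paper's own proof: realize $S_{0,n}$ as the double of the $n$-gon, observe that the $W$-arcs are pairwise non-crossing and cut the polygon into regions whose doubles are the complementary components, single out the unique non-pants region $R_0$ (the paper's $\Delta$), collapse the $W$-arcs to points to obtain an $(m+3)$-gon, and double to get $h$. The paper's version is far terser --- it simply asserts that the collapsed double carries $\Gamma_{m+3}$ onto $\Gamma^W_{m+3}$ --- whereas you spell out the curve-system matching that this assertion hides; just note that in your collapsed-polygon convention the $W$-arcs are \emph{vertices}, so diagonals never end on a ``$W$-side,'' and the surjectivity check reduces to verifying that non-adjacent sides of the $(m+3)$-gon come from non-adjacent sides of the $n$-gon.
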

\begin{proof}
Consider $S_{0,n}$ as the double of a regular $n$-gon as described above. Then the curves in $W$ are obtained from doubling pairwise disjoint arcs in the regular $n$-gon.  There is a component $\Delta$ of the complement of those arcs that is doubled to produce $(S_{0,n}-W)_0$.  Collapsing those arcs to points, $\Delta$ becomes an $(m+3)$-gon. Doubling, this defines a homeomorphism $h: S_{0,m+3} \to (S_{0,n}-W)_0$. The set $\Gamma^W_{m+3}$ in $(S_{0,n}-W)_0$ is then the image under $h$ of the set of curves obtained by doubling arcs connecting non-adjacent  pair of sides of the ($m+3$)-gon, i.e., $h(\Gamma_{m+3})=\Gamma^W_{m+3}$ as required. 
\end{proof}
\begin{figure}[ht]
\begin{center}
\includegraphics[height=5cm]{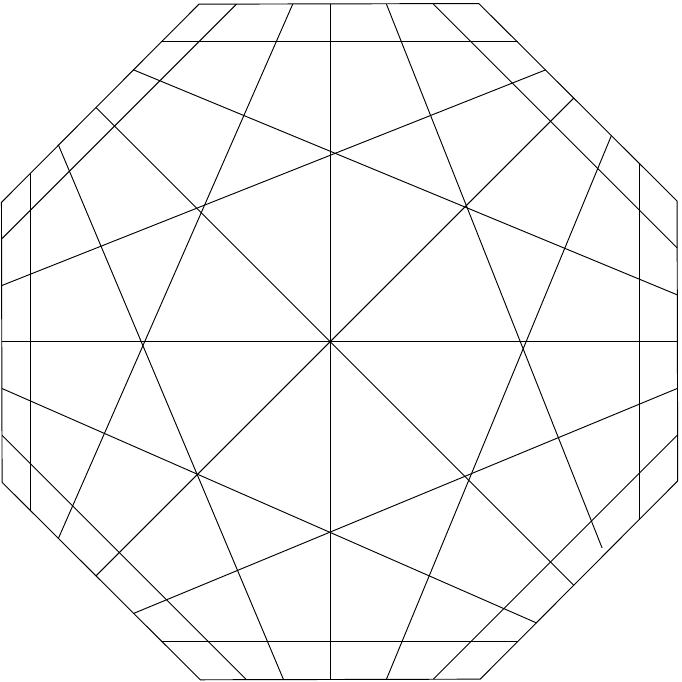} 
\caption{$S_{0,8}$ and the set of simple closed curves $\Gamma_8$.} 
\label{F:S_0_8}
\end{center}
\end{figure}
\begin{figure}[ht]
\begin{center}
\includegraphics[height=10cm]{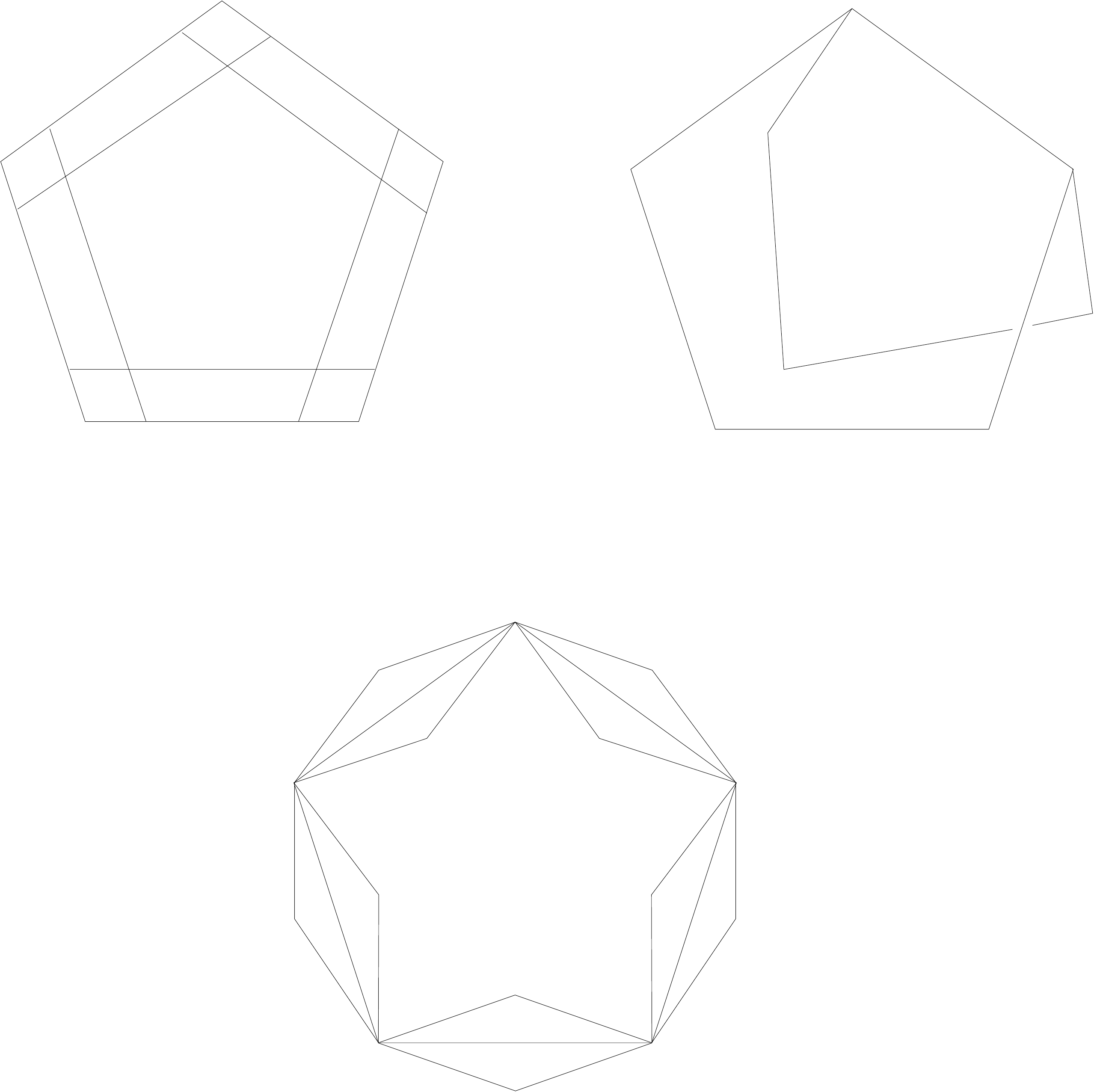} 
\caption{(Top left) $S_{0,5}$ and simple closed curves in $\Gamma_5$, (top right) $Z_5 \cup T^{\frac{1}{2}}_\beta(Z_5)$ and (bottom) the thick pentagon $\widehat{Z_5}$.} 
\label{F:S_0_5}
\end{center}

\begin{picture}(0,0)(0,0)
\put(60,330){\small $\beta$}
\put(90,350){\small $\alpha$}
\put(45,295){\small $\epsilon$}
\put(60,255){\small $\gamma$}
\put(125,230){\small $\delta$}

\put(270,355){\scriptsize $A=\left\{\alpha,\beta\right\}$}
\put(185,320){\scriptsize $E=\left\{\alpha,\gamma\right\}$}
\put(330,320){\scriptsize $B=\left\{\delta,\beta\right\}$}
\put(290,230){\scriptsize $C=\left\{\delta,\epsilon\right\}$}
\put(220,230){\scriptsize $D=\left\{\gamma,\epsilon\right\}$}

\put(330,260){\scriptsize $T^{1/2}_{\beta}(C)$}
\put(240,250){\scriptsize $T^{1/2}_{\beta}(D)$}
\put(260,320){\scriptsize $T^{1/2}_{\beta}(E)$}

\end{picture}

\end{figure}

Let $V_n$ be the set of vertices of $\mathcal{P}(S_{0,n})$ that correspond to pants decompositions consisting of curves from $\Gamma_n$.

Let $Z_n$ be a subgraph of $\mathcal{P}(S_{0,n})$ induced by $V_n$, that is, $Z_n$ is the largest subgraph with $V_n$ as its vertex set.
\begin{lem}
\label{lem:connected}
For $n\ge 5$, the subgraph $Z_n$ is finite and connected.
\end{lem}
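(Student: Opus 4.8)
The plan is to treat the two assertions separately: finiteness is immediate bookkeeping, while connectedness is proved by induction on $n$ using Lemma~\ref{lem:embed} as the reduction tool.

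Finiteness is quick. The set $\Gamma_n$ is finite, containing one curve for each of the $\binom{n}{2}-n$ non-adjacent pairs of sides of the $n$-gon. Every vertex of $Z_n$ is a pants decomposition of $S_{0,n}$, hence a subset of $\Gamma_n$ of size $\kappa(S_{0,n})=n-3$. There are only finitely many such subsets, so $V_n$ is finite and therefore so is the induced graph $Z_n$.

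For connectedness I would induct on $n$, with base case $n=5$: here $\Gamma_5$ consists of exactly the five chain curves, and a direct check shows $V_5$ has five vertices assembling into the alternating pentagon of Figure~\ref{F:pentagon}, which is connected. For the inductive step, write $c_i=\alpha_{i,i+2}$ for the chain curve enclosing the two consecutive punctures cut off by the pair $(i,i+2)$, and cover $Z_n$ by the slices $Z_n\cap\mathcal{P}_{c_i}(S_{0,n})$. Three points must be established. First, every vertex lies in some slice: given $P\in V_n$, its dual trivalent tree has at least two leaves, so $P$ contains a curve bounding a pair of pants with two of the punctures; this curve lies in $\Gamma_n$ and hence encloses two \emph{consecutive} punctures, so it is some chain curve $c_i$ and $P\in\mathcal{P}_{c_i}(S_{0,n})$. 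Second, each slice is connected: applying Lemma~\ref{lem:embed} with $W=\{c_i\}$ identifies $(S_{0,n}-c_i)_0$ with $S_{0,n-1}$ carrying the system $\Gamma_{n-1}$, and under this identification the slice $Z_n\cap\mathcal{P}_{c_i}(S_{0,n})$ corresponds exactly to $Z_{n-1}$, which is connected by the inductive hypothesis. Third, the slices overlap along a connected pattern: $\mathcal{P}_{c_i}$ and $\mathcal{P}_{c_j}$ share a vertex exactly when $c_i$ and $c_j$ are disjoint, i.e. when $j\not\equiv i,i\pm 1\pmod n$, in which case any completion of $\{c_i,c_j\}$ to a maximal disjoint family inside $\Gamma_n$ provides a common vertex; the resulting disjointness graph on $\{c_1,\dots,c_n\}$ is the complement of the $n$-cycle, which is connected for $n\geq 5$. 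Since the slices are connected subgraphs whose nerve is connected and their vertex sets exhaust $V_n$, their union is a connected spanning subgraph of $Z_n$, whence $Z_n$ is connected.

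I expect the main obstacle to be the overlap step, together with making the slice identification in the second point precise: one must verify that a pair of disjoint chain curves always extends to a full pants decomposition within $\Gamma_n$, and that the isomorphism of Lemma~\ref{lem:embed} carries the induced graph structure (the elementary moves fixing $c_i$) faithfully onto that of $Z_{n-1}$. A conceptually cleaner though less self-contained alternative would be to note that $\Gamma_n$ is precisely the set of diagonals of the $n$-gon whose vertices are the punctures, that $V_n$ is the set of triangulations, and that elementary moves are exactly diagonal flips; then $Z_n$ is the $1$-skeleton of the associahedron, whose finiteness and connectedness are classical.
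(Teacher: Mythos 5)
Your proof is correct, and its skeleton matches the paper's: both argue finiteness by counting, and connectedness by induction on $n$, using the fact that every vertex of $Z_n$ contains a chain curve $c_i$ and that Lemma~\ref{lem:embed} identifies the slice $Z_n\cap\mathcal{P}_{c_i}(S_{0,n})$ with $Z_{n-1}$. Where you diverge is in how the slices are tied together. The paper singles out the $n$ ``fan'' vertices $v_i=\{\alpha_{i,x}\mid x\neq i-1,i,i+1\}$, asserts an explicit path of length $n-3$ from $v_i$ to $v_{i+1}$, and routes an arbitrary vertex to $v_s$ inside its slice; you instead run a nerve-of-covering argument, noting that disjoint chain curves $c_i,c_j$ (those with $j\not\equiv i,i\pm1 \pmod n$) yield overlapping slices via any completion of $\{c_i,c_j\}$ to a maximal disjoint family in $\Gamma_n$, and that the resulting intersection pattern is the complement of the $n$-cycle, connected for $n\geq5$. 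The two verifications you flag as obstacles do go through: under the correspondence between curves in $\Gamma_n$ and noncrossing diagonals, a maximal pairwise-disjoint subset of $\Gamma_n$ is a triangulation with $n-3$ curves, hence a pants decomposition; and the homeomorphism of pairs in Lemma~\ref{lem:embed}, combined with $\mathcal{P}_{c_i}(S_{0,n})\cong\mathcal{P}((S_{0,n}-c_i)_0)$, carries induced subgraphs to induced subgraphs, so the slice really is $Z_{n-1}$. You also add two things the paper does not: a justification (leaves of the dual trivalent tree) of the chain-curve claim, which the paper merely asserts, and the closing observation that $Z_n$ is precisely the flip graph of triangulations of the $n$-gon, i.e.\ the $1$-skeleton of the associahedron --- a genuinely different and shorter classical route to both finiteness and connectedness, consistent with Lemma~\ref{lem:edges of Z_n}, which exhibits the $n-3$ flippable diagonals at each vertex.
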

\begin{proof}
The finiteness is obvious since the number of vertices is finite. We prove the connectedness by induction on $n$, the number of punctures. The base case when $n=5$ is true since $Z_5$ is a pentagon. 

For each $i=1,...,n$, let $v_i$ be a vertex in $Z_n$ corresponding to the pants decomposition $\left\{\alpha_{i,x}\in \Gamma_n | x\neq i-1,i,i+1\right\} $. It is not hard to see that two vertices $v_i$ and $v_{i+1}$ are connected in $Z_n$ by a path of length $n-3$. Hence any two vertices $v_i$ and $v_j$ are connected in $Z_n$. Given any vertex $v$ in $Z_n$, we will show that $v$ is connected to one of the $v_i$'s. Note that as a pants decomposition, $v$ must contain a chain curve. Choose a chain curve $\alpha=\alpha_{s,s+2}$ in $v$. The nontrivial component of $S_{0,n}-\alpha$ is homeomorphic to $S_{0,n-1}$. By Lemma~\ref{lem:embed}, $Z_n\cap P_{\alpha}(S_{0,n})\cong Z_{n-1}$. By the induction hypothesis, the vertex $v$ is connected to $ v_s=\left\{\alpha_{s,x}\in \Gamma_n\right\}$ by a path in $Z_n\cap P_{\alpha}(S_{0,n})$. Therefore we conclude that $Z_n$ is connected.
\end{proof}
Let 
\[X_5 = Z_5 \cup \bigcup_{c \in \Gamma_5} T^{\pm \frac{1}{2}}_c(Z_5),\]
where $T^{\frac{1}{2}}_c$ is a simplicial map on $\mathcal{P}(S_{0,5})$ induced by the half-twist around the chain curve $c$; see Figure~\ref{F:S_0_5}.

We see that $X_5$ is a finite subgraph consisting of $11$ alternating pentagons; one is $Z_5$ and the other $10$ are the images of $Z_5$ under the twists. Each image of $Z_5$ shares an edge with $Z_5$, thus has two edges adjacent to $Z_5$ (in particular, $X_5$ is connected). These two edges from each of the $10$ image pentagons form $10$ triangles; each triangle has one of its edges in $Z_5$. In particular, each edge of $Z_5$ has two triangles attached to it. We call $Z_5$ \textbf{the core pentagon} of $X_5$ and call $Z_5$ together with these $10$ triangles \textbf{the thick pentagon $\widehat{Z_5}$} of $X_5$. See Figure~\ref{F:S_0_5}. We also call any subgraph which is isomorphic to $\widehat{Z_5}$ a thick pentagon.
\begin{lem}
\label{lem:thick pentagon}
If $Z \subset \mathcal{P}(S_{0,m})$ is an alternating pentagon then there exists a unique thick pentagon $\widehat{Z}$ containing $Z$ and a unique subgraph $X \cong X_5$ containing $\widehat{Z}$. Moreover if $v, w\in \widehat{Z}-Z$ are connected by a length-$2$ path $\gamma$ in $X$ then $\gamma$ is the unique length-$2$ path in $\mathcal{P}(S_{0,m})$ connecting $v$ and $w$.  
\end{lem}
\begin{proof}
Each edge of $Z$ is contained in a unique Farey graph determined by the deficiency-$1$ multicurve corresponding to the intersection of its endpoints, (see \cite[Lemma 2]{Mar} or \cite[Lemma 8]{Aramayona}). Then the existence and uniqueness of $\widehat{Z}$ come from the fact that, in a Farey graph, each edge is contained in exactly two triangles. For example, in Figure~\ref{F:S_0_5}, the edge $\overline{AE}=\overline{\left\{\alpha,\beta\right\}\left\{\alpha,\gamma\right\}}$ has two triangles attaching to it and the remaining two vertices of the triangles are $T^{\frac{1}{2}}_\gamma(A)=T^{-\frac{1}{2}}_\beta(E)$ and $T^{\frac{1}{2}}_\beta(E)=T^{-\frac{1}{2}}_\gamma(A)$.

It remains to prove the existence and uniqueness of $X$. Since $Z$ is an alternating pentagon, there exists a deficiency-$2$ multicurve $Q$ such that $S_{0,m}-Q$ has exactly one nontrivial component $(S_{0,m}-Q)_0$ homeomorphic to $S_{0,5}$ and $Z\subset \mathcal{P}_Q(S_{0,m})\cong \mathcal{P}((S_{0,m}-Q)_0)\cong \mathcal{P}(S_{0,5})$, \cite[Lemma 8]{Aramayona}. Therefore we can write $Z=\left\{A=\left\{\alpha,\beta\right\}\cup Q, B=\left\{\delta,\beta\right\}\cup Q, ..., E=\left\{\alpha,\gamma\right\}\cup Q\right\}$; see Figure~\ref{F:Z}. 
\begin{figure}[ht]
\begin{center}
\includegraphics[height=4.8cm]{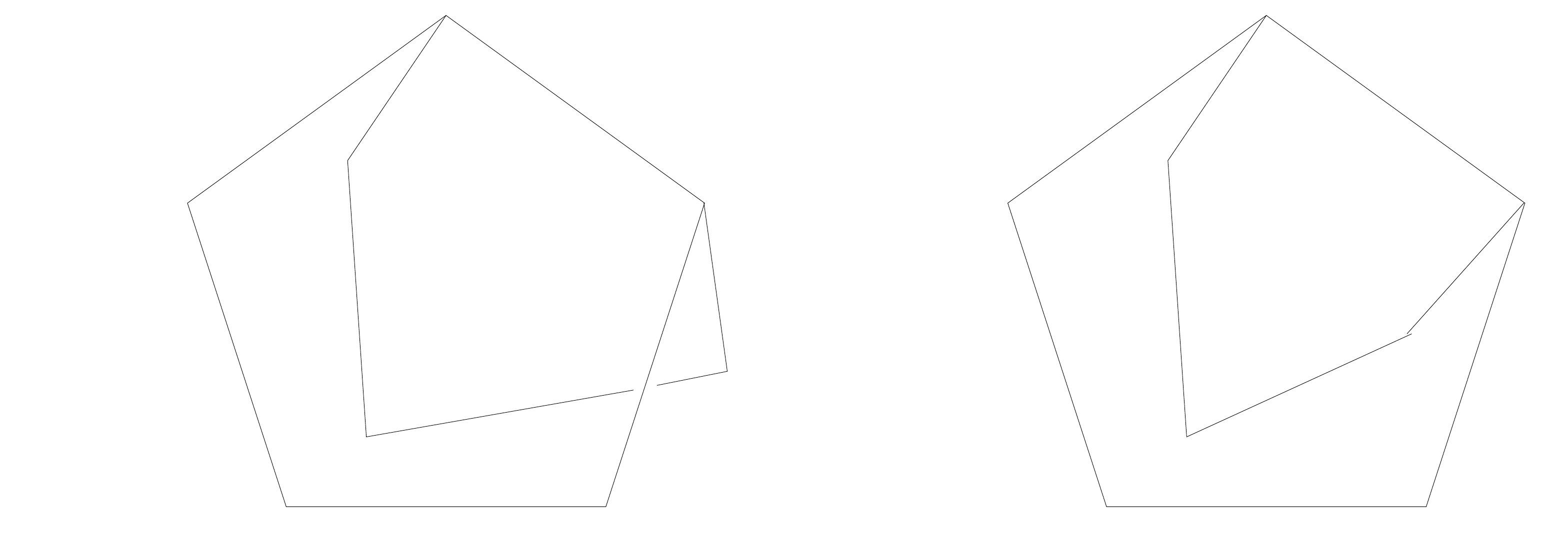} 
\caption{$Z$ and two possible pictures of $Y$.} 
\label{F:Z}
\end{center}
\begin{picture}(0,0)(0,0)

\put(110,185){\scriptsize $A=\left\{\alpha,\beta\right\}\cup Q$}
\put(3,145){\scriptsize $E=\left\{\alpha,\gamma\right\}\cup Q$}
\put(175,140){\scriptsize $B=\left\{\delta,\beta\right\}\cup Q$}
\put(150,47){\scriptsize $C=\left\{\delta,\epsilon\right\}\cup Q$}
\put(60,47){\scriptsize $D=\left\{\gamma,\epsilon\right\}\cup Q$}

\put(95,145){\scriptsize $T^{1/2}_{\beta}(E)$}
\put(190,85){\scriptsize $T^{1/2}_{\beta}(C)$}
\put(90,65){\scriptsize $F$}

\put(305,145){\scriptsize $T^{1/2}_{\beta}(E)$}
\put(330,110){\scriptsize $T^{1/2}_{\epsilon}(B)$}
\put(300,65){\scriptsize $F^{\prime}$}

\put(320,185){\scriptsize $A$}
\put(250,140){\scriptsize $E$}
\put(380,140){\scriptsize $B$}
\put(360,47){\scriptsize $C$}
\put(270,47){\scriptsize $D$}
\end{picture}

\end{figure}

Write $\Gamma = \left\{\alpha, \beta, \delta, \epsilon, \gamma\right\}$. The subgraph 
\[X = Z \cup \bigcup_{c \in \Gamma} T^{\pm \frac{1}{2}}_c(Z),\]
contains $Z$ and $X\cong X_5$. This proves the existence of $X$.

Next, let $X^{\prime}\subset \mathcal{P}(S_{0,m})$ be a subgraph such that $Z\subset X^{\prime}$ and $X^{\prime}\cong X_5$. 
We will show that $X=X^{\prime}$. Since $\widehat{Z}$ is the unique thick pentagon containing $Z$ and $X^{\prime}\cong X_5$, it follows that $\widehat{Z}\subset X^{\prime}.
$ Let $Y\subset X^{\prime}$ be a pentagon sharing only one of its edges with $Z$. 
Considering Figure~\ref{F:Z}, we assume without loss of generality that the shared edge is  $\overline{AB}$ and $T^{\frac{1}{2}}_{\beta}(E)$ is a vertex of $Y$ adjacent to $\overline{AB}$. 
Then another edge of $Y$ adjacent to $\overline{AB}$ has either $T^{\frac{1}{2}}_{\beta}(C)$ or $T^{\frac{1}{2}}_{\epsilon}(B)$ as its endpoint.
Let $F$ be the remaining vertex of $Y$. We note that, as a pants decomposition, $F$ must contain the multicurve $Q$. 
Then by direct calculations, 
if $T^{\frac{1}{2}}_{\beta}(C)$ is the endpoint, then the vertex $F=Q\cup \left\{T^{\frac{1}{2}}_{\beta}(\gamma), T^{\frac{1}{2}}_{\beta}(\epsilon)\right\}=T^{\frac{1}{2}}_{\beta}(D)$ so $Y=T^{\frac{1}{2}}_{\beta}(Z)$, 
and if $T^{\frac{1}{2}}_{\epsilon}(B)$ is the endpoint, then there is no such $Y$ since $i(T^{\frac{1}{2}}_{\beta}(\gamma), T^{\frac{1}{2}}_{\epsilon}(\beta))=4>0$.
Hence any pentagon $Y\subset X^{\prime}$ sharing only one edge with $Z$ is $T^{\pm \frac{1}{2}}_{c}(Z)$ for some curve $c\in\Gamma$. We conclude that $X=X^{\prime}$. The last statement of the lemma follows from the proof.
\end{proof}
Let $\mathcal{G}$ be a subgraph of $\mathcal{P}(S_{0,n})$. We define \textbf{the thick graph} $\widehat{\mathcal{G}}$ for $\mathcal{G}$ to be the union of $\mathcal{G}$ with all triangles in $\mathcal{P}(S_{0,n})$ that share at least one of their edges with an edge in $\mathcal{G}$. If $e$ is an edge, we call the thick graph $\hat{e}$ for $e$ \textbf{the thick edge} which is the union of two triangles whose the common edge is $e$. 

For a vertex $v\in Z_n$, let $\st(v)$ denote the closed star of $v$ which is the union of all edges containing $v$. We define $\st_{Z_n}(v)=\st(v)\cap Z_n$, and let $\sth_{Z_n}(v)$ be the thick graph for $\st_{Z_n}(v)$.
\begin{lem}
\label{lem:edges of Z_n}
Let $v$ be a vertex of $Z_n$. Then $\st_{Z_n}(v)$ consists of $n-3$ edges. Moreover these $n-3$ edges are contained in $n-3$ distinct Farey graphs. Consequently, $\sth_{Z_n}(v)$ consists of $n-3$ thick edges and these thick edges are contained in different Farey graphs.
\end{lem}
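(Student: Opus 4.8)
The plan is to count the edges of $Z_n$ at $v$ by organizing them according to which Farey graph they lie in, and to show that each available Farey graph contributes exactly one edge. First I would record that a vertex $v$ of $Z_n$ is a pants decomposition of $S_{0,n}$, hence consists of exactly $\kappa(S_{0,n})=n-3$ curves, all drawn from $\Gamma_n$; write $v=\{c_1,\dots,c_{n-3}\}$. Any edge of $Z_n$ containing $v$ is an elementary move, so it has the form $\{v,v'\}$ with $v'\in V_n$ and $Q=v\cap v'$ a deficiency-$1$ multicurve; since $S_{0,n}$ is a sphere, $(S_{0,n}-Q)_0\cong S_{0,4}$ and, by Lemma~\ref{lem:Farey}, this edge lies in the Farey graph $\mathcal{P}_Q(S_{0,n})$. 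Because $|Q|=n-4$ and $Q\subset v$, we must have $Q=Q_i:=v\setminus\{c_i\}$ for a unique index $i$. Thus the edges at $v$ are partitioned among the deficiency-$1$ multicurves $Q_1,\dots,Q_{n-3}$, and it remains to count, for each $i$, the edges of $Z_n$ at $v$ lying in $\mathcal{P}_{Q_i}(S_{0,n})$.

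The core step is to show that each $Q_i$ contributes exactly one edge. Here I would apply Lemma~\ref{lem:embed} with $W=Q_i$ (a deficiency-$m$ multicurve with $m=1$), obtaining a homeomorphism of pairs $h:(S_{0,4},\Gamma_4)\to((S_{0,n}-Q_i)_0,\Gamma^{Q_i}_4)$ with $h(\Gamma_4)=\Gamma^{Q_i}_4$. A direct inspection of the construction shows that $\Gamma_4$ consists of exactly the two ``diagonal'' curves $\alpha_{1,3}$ and $\alpha_{2,4}$ of the square, and that $i(\alpha_{1,3},\alpha_{2,4})=2$. Consequently $\Gamma^{Q_i}_4$ has exactly two elements. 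Since $c_i\in\Gamma_n$ is disjoint from every curve of $Q_i$, we have $c_i\in\Gamma^{Q_i}_4$, so there is a unique other curve $c_i'\in\Gamma^{Q_i}_4$. Because $h$ preserves geometric intersection number, $i(c_i,c_i')=2$, so $v_i':=Q_i\cup\{c_i'\}$ is a vertex of $Z_n$ joined to $v$ by an elementary move, and it is the unique such neighbor coming from $Q_i$. This yields exactly $n-3$ edges in $\st_{Z_n}(v)$, one per index $i$.

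For the statement about Farey graphs, I would note that the edge $e_i$ arising from $Q_i$ lies in $\mathcal{P}_{Q_i}(S_{0,n})$, and that distinct indices give distinct multicurves $Q_i=v\setminus\{c_i\}$, hence distinct Farey graphs; so the $n-3$ edges lie in $n-3$ distinct Farey graphs (in particular they are distinct edges). Finally, for the thick graph: the two triangles making up the thick edge $\hat{e}_i$ attached to $e_i=\{v,v_i'\}$ lie in the unique Farey graph $\mathcal{P}_{Q_i}(S_{0,n})$ containing $e_i$, since each edge of a Farey graph has exactly two triangles. As $Q_i\cup Q_j=v$ for $i\neq j$, we have $\mathcal{P}_{Q_i}(S_{0,n})\cap\mathcal{P}_{Q_j}(S_{0,n})=\mathcal{P}_v(S_{0,n})=\{v\}$, so the thick edges meet only in $v$; hence they are genuinely distinct and lie in pairwise distinct Farey graphs, which proves the final assertion.

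I expect the main obstacle to be the verification at the heart of the second paragraph: one must be careful that applying Lemma~\ref{lem:embed} really does identify $\Gamma^{Q_i}_4$ with the two-element set $\Gamma_4=\{\alpha_{1,3},\alpha_{2,4}\}$ and that $c_i$ is among these two curves, so that the neighbor $c_i'$ both exists and is unique. Everything else is bookkeeping with the correspondence between edges, elementary moves, and the deficiency-$1$ multicurves $Q_i$.
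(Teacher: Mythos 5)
Your proof is correct and takes essentially the same route as the paper's: for each curve $c_i\in v$, the nontrivial component of the complement of $Q_i=v\setminus\{c_i\}$ is an $S_{0,4}$ containing exactly two intersecting curves of $\Gamma_n$ (one of them $c_i$), so each deficiency-$1$ multicurve contributes exactly one edge of $Z_n$ at $v$, and the distinct $Q_i$ give $n-3$ distinct Farey graphs. Your explicit appeal to Lemma~\ref{lem:embed} with $m=1$ simply makes rigorous the two-curve count that the paper asserts without justification, and your thick-edge bookkeeping spells out what the paper compresses into ``The Lemma follows.''
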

\begin{proof}
Consider $v$ as a pants decomposition on $S_{0,n}$. Then $v$ contains $n-3$ simple closed curves. An edge which is adjacent to $v$ corresponds to an elementary move. Forgetting a closed curve $\alpha$ in $v$ gives a nontrivial component $(S_{0,n}-(v-\left\{\alpha\right\}))_0$ of $S_{0,n}$ homeomorphic to $S_{0,4}$. This nontrivial component contains two intersecting  curves in $\Gamma_n$ and one of these two curves is $\alpha$. Hence there is only one elementary move which is able to change $\alpha$ to another curve in $\Gamma_n$. Forgetting two different curves in $v$ gives two different nontrivial components homeomorphic to $S_{0,4}$. Therefore there are $n-3$ edges in $Z_n$ adjacent to $v$ and these edges are contained in $n-3$ different Farey graphs. The lemma follows.
\end{proof}
Recall the description of $S_{0,5}$ as the double of a pentagon with vertices removed. Let $e: S_{0,5} \to S_{0,5}$ be the involution exchanging the two pentagons. We observe that the involution $e$ induces a simplicial map on $\mathcal{P}(S_{0,5})$ whose the restriction to $Z_5$ is the identity and which restricts to a symmetry of $X_5$.
\begin{lem}
\label{lem:sym}
Let $G=\Sym(X_5,Z_5)$ be the subgroup of the symmetry group $\Sym(X_5)$ of $X_5$ consisting of all elements that fix $Z_5$ pointwise. Then $G\cong \BZ/2\BZ$ generated by $e$.
\end{lem}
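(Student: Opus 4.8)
```latex
\textbf{Proof proposal.}
The plan is to show that any symmetry of $X_5$ fixing the core pentagon $Z_5$ pointwise is determined by very little data, and that $e$ together with the identity exhausts the possibilities. First I would record what a symmetry $\sigma \in G$ can do. Since $\sigma$ fixes $Z_5$ pointwise, it fixes each of the five vertices $A,\dots,E$ and each of the five edges of $Z_5$. By Lemma~\ref{lem:thick pentagon} the thick pentagon $\widehat{Z_5}$ is the \emph{unique} thick pentagon containing $Z_5$, and by the same uniqueness $X_5$ is the unique copy of $X_5$ containing $\widehat{Z_5}$; hence $\sigma$ preserves $\widehat{Z_5}$ and permutes its constituent pieces. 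The key structural fact (from Lemma~\ref{lem:edges of Z_n}, or directly from the construction of $X_5$) is that each edge $e$ of $Z_5$ lies in a unique Farey graph $\mathcal{P}_Q(S_{0,5})$, and within that Farey graph $e$ has exactly two triangles attached, whose apex vertices are the two half-twist images $T^{\pm\frac{1}{2}}_c(\cdot)$ lying in $X_5$.

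Next I would analyze the action on these apex vertices. Because $\sigma$ fixes the two endpoints of each edge $e$ of $Z_5$ and preserves the unique Farey graph containing $e$, it must map the pair of apex triangles on $e$ to itself, so $\sigma$ either fixes both apexes or swaps the two apexes attached to $e$. This reduces the problem to a choice of sign on each edge, giving an a priori map $G \to (\BZ/2\BZ)^5$. The main content is then to show that these five signs are \emph{not} independent: a symmetry is forced to act by the same swap/non-swap on every edge simultaneously. I expect this to be the main obstacle. The way to force consistency is to use that the ten apex vertices are not merely attached edge-by-edge but are linked through the $10$ outer pentagons $T^{\pm\frac{1}{2}}_c(Z_5)$; each outer pentagon shares a single edge with $Z_5$ and its other four vertices are shared with, and constrain, the apexes on neighboring edges. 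Tracking which vertices coincide (as in the explicit coincidences $T^{\frac{1}{2}}_\gamma(A)=T^{-\frac{1}{2}}_\beta(E)$ recorded in the proof of Lemma~\ref{lem:thick pentagon}) shows that a swap on one edge propagates around the pentagon and forces a swap on the adjacent edge; hence the only consistent choices are ``swap everywhere'' or ``swap nowhere.''

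The ``swap nowhere'' choice is the identity. For the ``swap everywhere'' choice I would exhibit $e$ as realizing it: the involution $e$ of $S_{0,5}$ exchanging the two copies of the pentagon fixes every curve of $\Gamma_5$ (each such curve is the double of an arc, hence invariant), so $e$ fixes $Z_5$ pointwise, yet $e$ reverses orientation and therefore sends each half-twist $T^{\frac{1}{2}}_c$ to $T^{-\frac{1}{2}}_c$, i.e.\ it swaps the two apexes on every edge. Thus $e$ realizes the nontrivial consistent sign-choice and $e^2=\id$. Combining, the map $G \to \BZ/2\BZ$ recording the common sign is an isomorphism with $e$ a generator, so $G \cong \BZ/2\BZ$. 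The one routine verification left to the reader is the propagation argument of the previous paragraph, which is a finite check using the explicit vertex identifications among the outer pentagons already computed in Lemma~\ref{lem:thick pentagon}.
```
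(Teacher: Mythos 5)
Your proposal is correct and takes essentially the same route as the paper's proof: both encode a symmetry by its swap/no-swap action on the two triangle apexes over each edge of $Z_5$ (yielding the paper's injective homomorphism $\eta\colon G \to H \cong (\BZ/2\BZ)^5$), use the incidences of the ten outer pentagons with the apex vertices to propagate a swap on one edge to all edges, and realize the all-swap element by the doubling involution $e$. (One trivial slip: each outer pentagon has three, not four, vertices outside $Z_5$, of which two are apexes; this does not affect the argument.)
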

\begin{proof}
Recall the thick pentagon $\widehat{Z_5}$ in Figure~\ref{F:S_0_5}. There are $10$ vertices of the triangles outside $Z_5$ and we number them as in Figure~\ref{F:sym}. 
\begin{figure}[ht]
\begin{center}
\includegraphics[height=5cm]{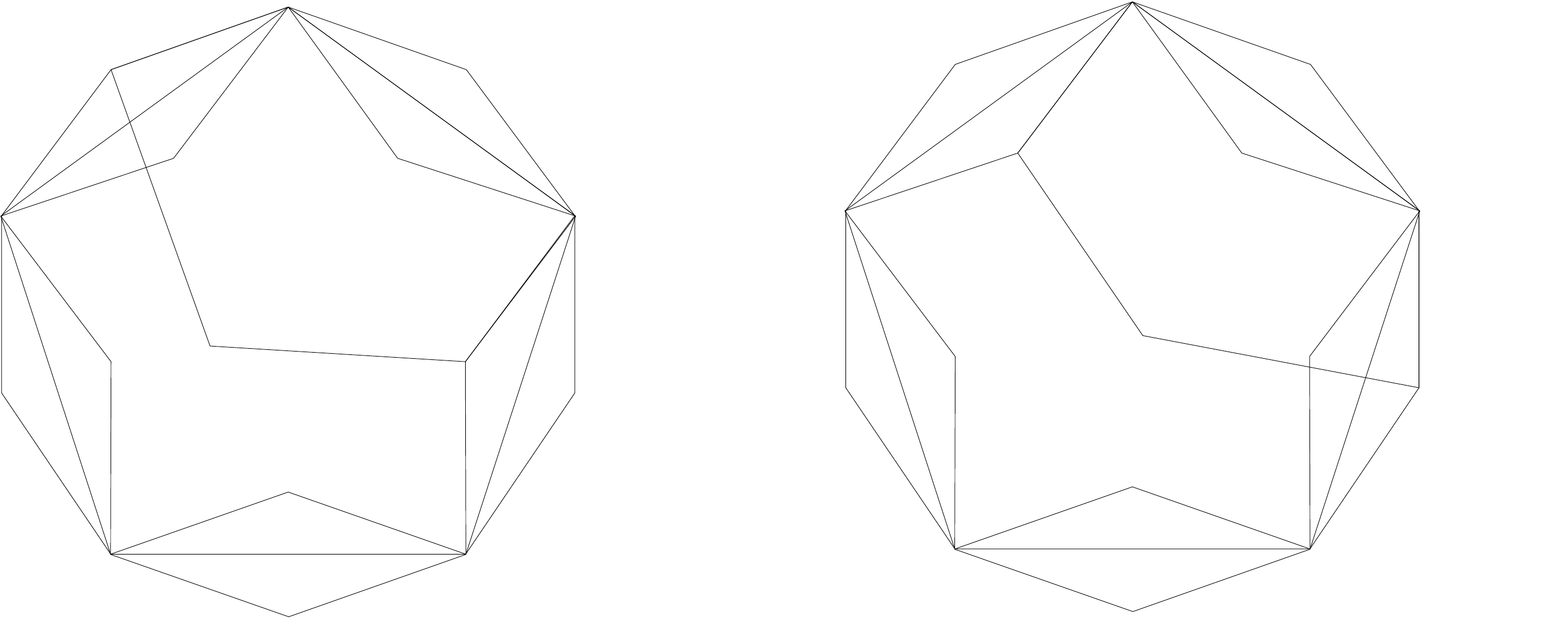} 
\caption{Thick pentagon $\widehat{Z_5}$ and the $10$ labeled vertices.} 
\label{F:sym}
\end{center}
\begin{picture}(0,0)(0,0)

\put(35,180){\small $1$}
\put(125,180){\small $3$}
\put(55,145){\small $2$}
\put(95,145){\small $4$}
\put(40,115){\small $10$}
\put(115,115){\small $6$}
\put(80,80){\small $8$}
\put(80,40){\small $7$}
\put(10,85){\small $9$}
\put(150,95){\small $5$}

\put(230,180){\small $1$}
\put(320,180){\small $3$}
\put(257,145){\small $2$}
\put(295,145){\small $4$}
\put(235,115){\small $10$}
\put(310,115){\small $6$}
\put(275,80){\small $8$}
\put(275,40){\small $7$}
\put(205,85){\small $9$}
\put(345,95){\small $5$}
\end{picture}

\end{figure}
Let $V$ be the set of these $10$ vertices.
Each vertex in $V$ is paired by two pentagons in $X_5$ to two vertices in $V$, for example, $\left\{1,6\right\}$ is a pair because $1$ and $6$ are in the same image pentagon shown in Figure~\ref{F:sym}. This forms $10$ pairs of vertices, namely, $\left\{1,6\right\}, \left\{2,5\right\}, \left\{1,8\right\}, \left\{2,7\right\}, \left\{3,8\right\}, \left\{4,7\right\}, \left\{3,10\right\}, \left\{4,9\right\}, \left\{5,10\right\}, \left\{6,9\right\}$. 

Let $\BS_{\left\{x,y\right\}}$ be the symmetric group on two letters $x$ and $y$. Note that $\BS_{\left\{x,y\right\}}\cong \BZ/2\BZ$ generated by $\sigma_{\left\{x,y\right\}}$ which interchanges $x$ and $y$.

Let $H=\BS_{\left\{1,2\right\}}\times \BS_{\left\{3,4\right\}}\times \BS_{\left\{5,6\right\}}\times \BS_{\left\{7,8\right\}}\times \BS_{\left\{9,10\right\}}\cong (\BZ/2\BZ)^5$.
There is a natural injective homomorphism $\eta : G \to H$. Let $g$ be a nonidentity element in $G$. Without loss of generality, we assume that $g(1)=2$. By Lemma~\ref{lem:thick pentagon}, $g$ maps the pentagon in $X_5$ containing $1$ and $6$ to the pentagon in $X_5$ containing $2$ and $5$; see Figure~\ref{F:sym}. Thus $g(5)=6$. By similar argument and the above observation of how vertices in $V$ are paired to each others, we see that $\eta(g)=(\sigma_{\left\{1,2\right\}}, \sigma_{\left\{3,4\right\}}, \sigma_{\left\{5,6\right\}}, \sigma_{\left\{7,8\right\}}, \sigma_{\left\{9,10\right\}})$. Hence $\eta(G)\cong \BZ/2\BZ$ and so $G\cong \BZ/2\BZ$  generated by $e$.
\end{proof}
For $n>5$, we construct $X_n$ as follows.
Let $W\subset \Gamma_n$ be a deficiency-$2$ multicurve such that the nontrivial component $(S_{0,n}-W)_0$ of $S_{0,n}-W$ is homeomorphic to $S_{0,5}$. By Lemma~\ref{lem:embed}, we have a homeomorphism of pairs $h:(S_{0,5},\Gamma_5) \to ((S_{0,n}-W)_0,\Gamma^W_5)$.
Let 
\[ X_5^W = h^W(X_5) = \{ h(u) \cup W \mid u \in X_5 \},\]
where $h^W:P(S_{0,5})\to P(S_{0,n})$ is the induced map of $h$ as defined in Section~\ref{sec:Introduction} by $h^W(u)=h(u)\cup W$.

Finally we let 
\[
X_n = Z_n \cup \bigcup_W X^W_5,
\]
where the union is taken over all deficiency-$2$ multicurves in $\Gamma_n$ with a $5$-punctured sphere component. 
\begin{lem}
\label{lem:X_n}
For $n\geq6, X_n$ has the following properties.
\begin{enumerate}
\item $X_n \subset \mathcal{P}(S_{0,n})$ is connected.
\item For each chain curve $\alpha_i$, $i=1,...,n$, let $X^i_{n-1}=X_n \cap \mathcal{P}_{\alpha_i}(S_{0,n})$. Then $X^i_{n-1}\cong X_{n-1}$. Moreover, this isomorphism is induced by $h: S_{0,n-1}\to(S_{0,n}-\alpha_i)_0$ as $h^{\alpha_i}(v) = h(v) \cup \{\alpha_i\} \in X_{n-1}^i$.
\item If $n\geq 7$ and $\alpha_i, \alpha_j$ are disjoint chain curves then $X^{ij}_{n-2}=X_{n}\cap \mathcal{P}_{\left\{\alpha_i,\alpha_j\right\}}(S_{0,n})\cong X_{n-2}$ with isomorphism $h^{\{\alpha_i,\alpha_j \}}$.
\end{enumerate}
\end{lem}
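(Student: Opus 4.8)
The plan is to reduce all three statements to a single compatibility: cutting $S_{0,n}$ along a chain curve $\alpha_i$ carries the whole construction of $X_n$ to that of $X_{n-1}$. Fix the homeomorphism $h\colon S_{0,n-1}\to(S_{0,n}-\alpha_i)_0$ of Lemma~\ref{lem:embed}, so that $h^{\alpha_i}$ is a simplicial isomorphism $\mathcal{P}(S_{0,n-1})\to\mathcal{P}_{\alpha_i}(S_{0,n})$ and $h(\Gamma_{n-1})=\Gamma^{\alpha_i}_{n-1}$, the curves of $\Gamma_n$ disjoint from $\alpha_i$; for a deficiency-$2$ multicurve $W$ write $h_W\colon S_{0,5}\to(S_{0,n}-W)_0$ for the corresponding homeomorphism, with induced map $h^W$. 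The key fact I will use repeatedly is that these doubling homeomorphisms compose: if $W'\subset\Gamma_{n-1}$ is a deficiency-$2$ multicurve with a five-punctured component and $W=h(W')\cup\{\alpha_i\}$, then $W\subset\Gamma_n$ is again such a multicurve and $h^{W}=h^{\alpha_i}\circ h^{W'}$ as maps of pants graphs. I will also isolate an auxiliary fact $(\ast)$: for every $m\ge5$ and every edge $e$ of $Z_m$, the thick edge $\widehat e$ lies in $X_m$. For $m=5$ this is the thick pentagon; for $m\ge6$, the edge $e$ lies in a unique Farey graph $\mathcal{P}_Q(S_{0,m})$ with $Q\subset\Gamma_m$ of deficiency $1$, and choosing $q\in Q$ bordering the $S_{0,4}$-component, the multicurve $W'=Q\setminus\{q\}$ satisfies $(S_{0,m}-W')_0\cong S_{0,5}$ and $e\subset Z_5^{W'}$, so $\widehat e\subset\widehat{Z_5^{W'}}\subset X_5^{W'}\subset X_m$.

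Statement (1) is quick: $Z_n$ is connected by Lemma~\ref{lem:connected}, each $X_5^W=h^W(X_5)$ is connected because $X_5$ is, and the core pentagon $h^W(Z_5)$ has all its curves in $\Gamma^W_5\cup W\subset\Gamma_n$, hence lies in $Z_n$; thus every $X_5^W$ meets the connected graph $Z_n$ and $X_n$ is connected. For (2) I will prove the equality of subgraphs $h^{\alpha_i}(X_{n-1})=X_n\cap\mathcal{P}_{\alpha_i}(S_{0,n})$, which gives $X^i_{n-1}\cong X_{n-1}$ with the stated inducing map. The inclusion $\supseteq$ is piecewise: $h^{\alpha_i}(Z_{n-1})=Z_n\cap\mathcal{P}_{\alpha_i}$ (exactly as in Lemma~\ref{lem:connected}), and $h^{\alpha_i}(X_5^{W'})=X_5^{W}$ for $W=h(W')\cup\{\alpha_i\}$ by the composition identity.

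The reverse inclusion $\subseteq$ is where the work lies. A vertex $P\in X_n$ with $\alpha_i\in P$ either lies in $Z_n$, or in some $X_5^W$ with $\alpha_i\in W$ (both handled by the two identities above), or in some $X_5^W$ with $\alpha_i\notin W$. In the last case $\alpha_i$ is one of the two active curves of $P$ in $(S_{0,n}-W)_0$; since $\alpha_i\in\Gamma_n$ we have $\alpha_i\in\Gamma^W_5$, so $a:=h_W^{-1}(\alpha_i)$ is a chain curve of $S_{0,5}$. A direct check---single up to the dihedral symmetry of $Z_5$---shows that $X_5\cap\mathcal{P}_a(S_{0,5})$ is exactly the thick edge $\widehat{e_a}$, where $e_a$ is the unique edge of $Z_5$ in the Farey graph $\mathcal{P}_a(S_{0,5})$. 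Applying $h^W$ gives $X_5^W\cap\mathcal{P}_{\alpha_i}=h^W(\widehat{e_a})$, the thick edge of the $Z_n$-edge $h^W(e_a)\in Z_n\cap\mathcal{P}_{\alpha_i}=h^{\alpha_i}(Z_{n-1})$; writing $h^W(e_a)=h^{\alpha_i}(\bar e)$ with $\bar e$ an edge of $Z_{n-1}$ and invoking $(\ast)$ for $X_{n-1}$, this thick edge equals $h^{\alpha_i}(\widehat{\bar e})\subset h^{\alpha_i}(X_{n-1})$. Since every identification is of subgraphs and $h^{\alpha_i}$ is simplicial, edges match automatically, completing (2).

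For (3) I iterate (2). Writing $X_n\cap\mathcal{P}_{\{\alpha_i,\alpha_j\}}=(X_n\cap\mathcal{P}_{\alpha_i})\cap\mathcal{P}_{\alpha_j}=h^{\alpha_i}(X_{n-1})\cap\mathcal{P}_{\alpha_j}$ and transporting by $(h^{\alpha_i})^{-1}$, this becomes $X_{n-1}\cap\mathcal{P}_{\bar\alpha_j}(S_{0,n-1})$ with $\bar\alpha_j=h^{-1}(\alpha_j)$. From the polygon model of Lemma~\ref{lem:embed}, collapsing $\alpha_i$ sends the disjoint chain curve $\alpha_j$ to a chain curve $\bar\alpha_j$ of $S_{0,n-1}$; applying (2) to $X_{n-1}$ (valid since $n-1\ge6$) yields $X_{n-1}\cap\mathcal{P}_{\bar\alpha_j}\cong X_{n-2}$, and the composition identity identifies the resulting isomorphism with $h^{\{\alpha_i,\alpha_j\}}$. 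The main obstacle throughout is the exceptional case in (2)---vertices of $X_5^W$ containing the chain curve $\alpha_i$ while $\alpha_i\notin W$---which is resolved only once one knows both that $X_5\cap\mathcal{P}_a(S_{0,5})$ is a single thick edge and that, by $(\ast)$, such thick edges of $Z_{n-1}$-edges already sit inside $X_{n-1}$; the rest is careful bookkeeping with the identity $h^{W}=h^{\alpha_i}\circ h^{W'}$, whose proof (agreement of homeomorphisms of pairs up to isotopy) is routine.
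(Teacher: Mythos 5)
Your proposal is correct, and parts (1) and (3) coincide with the paper's argument (the paper literally proves (3) by ``applying 2 twice,'' as you do, with the same implicit check that $h^{-1}(\alpha_j)$ is again a chain curve). The genuine divergence is in the surjectivity step of (2), in the delicate case of a vertex of $X_5^W\cap\mathcal{P}_{\alpha_i}(S_{0,n})$ with $\alpha_i\notin W$. The paper resolves this by a multicurve swap: writing $u=W\cup\{\alpha_i\}\cup\{x\}$, it chooses $\gamma\in W$ so that $W''=(W-\{\gamma\})\cup\{\alpha_i\}$ is again a deficiency-$2$ multicurve with a five-punctured component, asserts $u\in X_5^{W''}$, and reduces to the already-handled case $\alpha_i\in W''$. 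You instead compute, once and for all at the level of $S_{0,5}$, that $X_5\cap\mathcal{P}_a(S_{0,5})$ is exactly the thick edge $\widehat{e_a}$ over the unique $Z_5$-edge in the Farey graph $\mathcal{P}_a(S_{0,5})$ (this is correct: the only vertices of $X_5$ containing the chain curve $a=\alpha$ are $A$, $E$, $T^{1/2}_\beta(E)=T^{-1/2}_\gamma(A)$ and $T^{-1/2}_\beta(E)=T^{1/2}_\gamma(A)$, since $T^{\pm1/2}_\alpha$ fixes both $A$ and $E$ and the pentagons twisted about $\delta$ or $\epsilon$ contribute nothing), and then invoke your fact $(\ast)$ that thick edges over $Z_m$-edges lie in $X_m$, whose proof by dropping a boundary curve $q$ of the $S_{0,4}$-component is sound because in a planar surface the two sides of $q$ are distinct components, the far side being a pair of pants. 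What each approach buys: the paper's swap is shorter but terse --- it only addresses vertices, and the assertion $u=W''\cup\{\gamma\}\cup\{x\}\in X_5^{W''}$ when $x$ is a half-twisted curve requires a verification (that $\{\gamma,x\}$ is an $X_5$-vertex in the new coordinates) which the paper leaves to the reader; your thick-edge identification settles that check explicitly, handles edge-containment as well as vertex-containment, and isolates the only remaining routine point, namely that $X_5^W$ is independent of the choice of homeomorphism $h_W$ (equivalently, your composition identity $h^W=h^{\alpha_i}\circ h^{W'}$ at the level of subgraphs, which holds because any self-homeomorphism of $(S_{0,5},\Gamma_5)$ preserves $X_5$ setwise). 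One cosmetic slip: your labels for the two inclusions of $h^{\alpha_i}(X_{n-1})=X_n\cap\mathcal{P}_{\alpha_i}(S_{0,n})$ are transposed --- the ``piecewise'' part establishes that the image covers nothing outside and sits inside the intersection, while your hard direction is $X_n\cap\mathcal{P}_{\alpha_i}(S_{0,n})\subseteq h^{\alpha_i}(X_{n-1})$ --- but the content on both sides is present and correct.
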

\begin{proof}
1. Since the core pentagon of each $X^W_5$ is in $Z_n$, $X^W_5$ is connected to $Z_n$ in $X_n$. By Lemma~\ref{lem:connected}, $Z_n$ is connected. Hence $X_n$ is connected.

2. Fix a chain curve $\alpha_i$. By Lemma~\ref{lem:embed}, there is a homeomorphism
$
h: S_{0,n-1} \to (S_{0,n}-\alpha_i)_0
$
such that $h(\Gamma_{n-1})=\Gamma^{\alpha_i}_{n-1}$
and $h$ induces an isomorphism from $Z_{n-1}$ to $Z_n\cap \mathcal{P}_{\alpha_i}(S_{0,n})$. 
Moreover, for every deficiency-$2$ multicurve $W_0\subset\Gamma_{n-1}$, $W=h(W_0)\cup\{\alpha_i\}$ is a deficiency-$2$ multicurve in $\Gamma_n$ and $h$ induces an isomorphism from $X^{W_0}_5$ to $X^W_5$. 
Therefore $h$ induces a simplicial injection from $X_{n-1}$ to $X_n\cap \mathcal{P}_{\alpha_i}(S_{0,n})$.
We need to show that this is surjective.

Recall the definition of $X_n$ above.
Let $W$ be a deficiency-$2$ multicurve in $\Gamma_n$ with $(S_{0,n}-W)_0\cong S_{0,5}$.
If $i(\alpha_i,W)\neq 0$, then $X^W_5\cap \mathcal{P}_{\alpha_i}(S_{0,n})=\emptyset$. 
For the rest of the proof we assume that $i(\alpha_i,W)= 0$.
To prove the surjectivity, we will show that any vertex $u\in X^W_5\cap\mathcal{P}_{\alpha_i}(S_{0,n})$ is in $h^{\alpha_i}(X^{W_0}_5)$ for some deficiency-$2$ multicurve $W_0$ in $\Gamma_{n-1}$. We consider the following two cases.

If $\alpha_i \in W$, then $W_0' = W - \{\alpha_i\}$ is a deficiency-$2$ multicurve in $(S_{0,n}-\alpha_i)_0$. Setting $W_0 = h^{-1}(W_0')$, we have $W = h(W_0) \cup \{\alpha_i\}$.  Thus $X_5^W$ is the image of $X_5^{W_0}$ by $h^{\alpha_i}$.

Suppose $\alpha_i\notin W$. 
Then $\alpha_i$ is contained in $(S_{0,n}-W)_0$, moreover $\alpha_i\in \Gamma^W_5$.
Hence, by the definition of $X^W_5$,  $X^W_5\cap \mathcal{P}_{\alpha_i}(S_{0,n})\neq\emptyset$.
Let $u$ be a vertex in $X^W_5\cap \mathcal{P}_{\alpha_i}(S_{0,n})$.
Consider $u$ as a pants decomposition.
We claim that there is a deficiency-$2$ multicurve $W^{\prime\prime}$ such that $\alpha_i\in W^{\prime\prime}$, $u\in X^{W^{\prime\prime}}_5\cap \mathcal{P}_{\alpha_i}(S_{0,n})$, and so from the case that $\alpha_i \in W^{\prime\prime}$, we see that $u$ is in the image of $h^{\alpha_i}$, as required.
We prove the claim as follows.
Since $u\in X^W_5\cap \mathcal{P}_{\alpha_i}(S_{0,n})$ and $\alpha_i\notin W$, $u=W\cup \left\{\alpha_i\right\}\cup \left\{x\right\}$ for some simple closed curve $x=y$ or $T^{\pm\frac{1}{2}}_\beta(y)$, with $y, \beta\in \Gamma^W_n$. 
Since $S_{0,n}$ has complexity at least $3$ and $\alpha_i$ is a chain curve, there exists a close curve $\gamma\in W$ such that $W^{\prime\prime}=(W-\left\{\gamma\right\})\cup\left\{\alpha_i\right\}$ is a deficiency-$2$ multicurve with nontrivial component $(S_{0,n}-W^{\prime\prime})_0\cong S_{0,5}$.
Then $u=W^{\prime\prime} \cup\left\{\gamma\right\}\cup \left\{x\right\}\in X^{W^{\prime\prime}}_5\cap \mathcal{P}_{\alpha_i}(S_{0,n})$ as desired.

3. The statement is proved by applying 2 twice.
\end{proof}
\section{The proof for $S_{0,5}$}
\label{sec:base case}
We prove the main theorem for $n=5$.
\begin{lem}
\label{lem:base}
Let $X_5\subset \mathcal{P}(S_{0,5})$ be as above. 
Then for any punctured sphere $S_{0,m}$ and any  injective simplicial map 
\[
\phi:X_5 \to \mathcal{P}(S_{0,m}),
\]
there exists a $\pi_1$-injective embedding $f:S_{0,5} \to S_{0,m}$, unique up to isotopy, that induces $\phi$.
\end{lem}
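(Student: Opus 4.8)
The strategy is to leverage the combinatorial rigidity of the thick pentagon together with Lemma~\ref{lem:pentagon}, which already handles a single alternating pentagon once we know its image is again an alternating pentagon. The first task is therefore to show that $\phi$ sends the core pentagon $Z_5$ to an alternating pentagon in $\mathcal{P}(S_{0,m})$. Since $\phi$ is an injective simplicial map, it sends the $5$-cycle $Z_5$ to a $5$-cycle; the point is to rule out the possibility that two consecutive edges of the image lie in a common Farey graph. I would argue this by counting triangles: by Lemma~\ref{lem:thick pentagon} each edge of $Z_5$ in $X_5$ has exactly two triangles attached to it (inside $X_5$), and by Lemma~\ref{lem:Farey} two consecutive edges sharing a Farey graph would force additional coincidences among the images of these triangles, contradicting the injectivity of $\phi$ on the $11$ pentagons of $X_5$. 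Concretely, if the image pentagon were not alternating, one pair of its edges would lie in one Farey graph, and the $10$ attached triangles of $X_5$ could not all map to distinct triangles with the required adjacency pattern.

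Once $\phi(Z_5)$ is known to be an alternating pentagon, Lemma~\ref{lem:pentagon} produces a deficiency-$2$ multicurve $Q$ on $S_{0,m}$ and a homeomorphism $f:S_{0,5}\to (S_{0,m}-Q)_0$ with $\phi|_{Z_5}=f^Q|_{Z_5}$. The remaining work is to upgrade agreement on $Z_5$ to agreement on all of $X_5$. Here I would use Lemma~\ref{lem:thick pentagon}: the thick pentagon $\widehat{Z_5}$ and then the full subgraph $X_5\cong X_5$ are \emph{uniquely} determined by the pentagon $Z_5$ inside any pants graph. Since $f^Q$ is induced by an embedding, $f^Q(X_5)$ is a subgraph isomorphic to $X_5$ containing $f^Q(Z_5)=\phi(Z_5)$; and $\phi(X_5)$ is another such subgraph (as $\phi$ is an injective simplicial map and $X_5$ is rigid enough that its image is isomorphic to it). By the uniqueness clause of Lemma~\ref{lem:thick pentagon}, these two subgraphs coincide, so $\phi(X_5)=f^Q(X_5)$ as subgraphs. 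This shows $\phi$ and $f^Q$ agree setwise on $X_5$.

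The final step is to promote this setwise agreement to pointwise agreement, i.e. $\phi(u)=f^Q(u)$ for every vertex $u\in X_5$, not merely $\phi(X_5)=f^Q(X_5)$. On $Z_5$ we already have $\phi|_{Z_5}=f^Q|_{Z_5}$. Consider the composite $g=(f^Q)^{-1}\circ\phi$, which is then a symmetry of $X_5$ fixing $Z_5$ pointwise. By Lemma~\ref{lem:sym}, the group of such symmetries is $\BZ/2\BZ$, generated by the pentagon-exchanging involution $e$ of $S_{0,5}$. Hence either $g=\id$, in which case $\phi=f^Q$ and we are done, or $g=e$, in which case I would replace $f$ by $f\circ e$ (equivalently precompose the embedding with the involution); since $e$ fixes $Z_5$ pointwise this does not disturb the agreement on $Z_5$ but corrects the action on the remaining $10$ vertices, giving $\phi=(f\circ e)^Q$ on all of $X_5$. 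Uniqueness of $f$ up to isotopy follows because $Q$ is determined by $\phi|_{Z_5}$ via Lemma~\ref{lem:pentagon} and the embedding is determined by its induced map on the rigid graph.

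\textbf{Main obstacle.}
I expect the genuinely delicate step to be the first one: showing $\phi(Z_5)$ is alternating. Injectivity and simpliciality alone do not obviously forbid a non-alternating image, so the argument must use the surrounding triangle structure of $X_5$ in an essential way, tracking how the $10$ triangles constrain the Farey-graph memberships of the image edges. Getting the triangle-counting contradiction airtight—ensuring that a non-alternating image really does force a collision among the images of distinct vertices of $X_5$—is where the real care is needed.
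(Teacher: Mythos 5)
Your second, third, and fourth steps match the paper's proof essentially verbatim: apply Lemma~\ref{lem:pentagon} to the alternating pentagon $\phi(Z_5)$ to obtain $Q$ and $f$ with $\phi|_{Z_5}=f^Q|_{Z_5}$; invoke the uniqueness clause of Lemma~\ref{lem:thick pentagon} to get $\phi(X_5)=f^Q(X_5)$ as subgraphs; and use Lemma~\ref{lem:sym} to conclude that $(f^Q)^{-1}\circ\phi$ is either $\id_{X_5}$ or $e$, replacing $f$ by $f\circ e$ in the second case. That part is correct and is exactly the paper's route.

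The genuine gap is in the first step, precisely where you flagged the risk. Your proposed mechanism — that a non-alternating image forces ``a collision among the images of distinct vertices,'' contradicting injectivity — cannot work, because a Farey graph contains infinitely many triangles (two at each edge), and there is no counting obstruction to the ten attached triangles mapping to pairwise distinct triangles even when consecutive image edges share a Farey graph. In the paper, injectivity only \emph{pins down the configuration}; the contradiction comes from two other inputs. To rule out three consecutive edges of $Z_5$ landing in one Farey graph $F$, the six attached triangles must map to the six distinct triangles at the three image edges, forcing $\dist_F(\phi(A),\phi(D))=3$ while $\dist_{\phi(X_5)}(\phi(A),\phi(D))\leq 2$; this is a contradiction only because Farey graphs are \emph{isometrically embedded} in $\mathcal{P}(S_{0,m})$, a theorem of \cite{APS} that you never invoke and that is not replaceable by combinatorics of $X_5$. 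To rule out two adjacent edges in $F$, injectivity forces the image edges $\overline{\phi(A)\phi(B)}$ and $\overline{\phi(B)\phi(C)}$ to be separated by at least three triangles, so that writing $\phi(A)=W\cup\left\{a\right\}$, $\phi(B)=W\cup\left\{b\right\}$ one gets $\phi(C)=W\cup\left\{T^k_b(a)\right\}$ with $k\geq 3/2$; the contradiction is then topological, not a collision: since $i(a,T^k_b(a))>2$, the vertices $\phi(D)=W''\cup\left\{T^k_b(a)\right\}$ and $\phi(E)=W'\cup\left\{a\right\}$ have intersection number greater than $2$, so no elementary move joins them and the image pentagon cannot close up — even though all vertex images are distinct. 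Without the isometric-embedding fact and this intersection-number computation, your first step fails, and the remainder of your otherwise correct argument has no alternating pentagon to feed into Lemma~\ref{lem:pentagon}.
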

\begin{proof}
We show that $\phi$ maps the core pentagon $Z_5$ of the thick pentagon $\widehat{Z_5}$  in $X_5$ to an alternating pentagon in $\mathcal{P}(S_{0,m})$.

Let $F$ be a Farey graph in $\mathcal{P}(S_{0,m})$. 
We claim that $\phi$ cannot map any three consecutive edges of $Z_5$ into $F$.
To find a contradiction, we assume that $\phi$ maps three consecutive edges $\overline{ABCD}$ of $Z_5$ into $F$. 
Since $\phi$ is locally injective, $\phi$ maps all six triangles attaching to these edges to distinct triangles in $F$. Up to an automorphism of $F$, the image of the six triangles must be one of the two pictures in Figure~\ref{F:1}.
This implies that the distance of $\phi(A)$ and $\phi(D)$ in $F$ is greater than the distance of $\phi(A)$ and $\phi(D)$ in $\mathcal{P}(S_{0,m})$, that is, 
\[
\dist_{F}(\phi(A),\phi(D))=3>2=\dist_{\phi(X_5)}(\phi(A),\phi(D))\geq\dist_{\mathcal{P}(S_{0,m})}(\phi(A),\phi(D)).
\]
This is a contradiction to the fact, proven in \cite{APS}, that $F$ is isometrically embedded.
\begin{figure}[ht]
\begin{center}
\includegraphics[height=5cm]{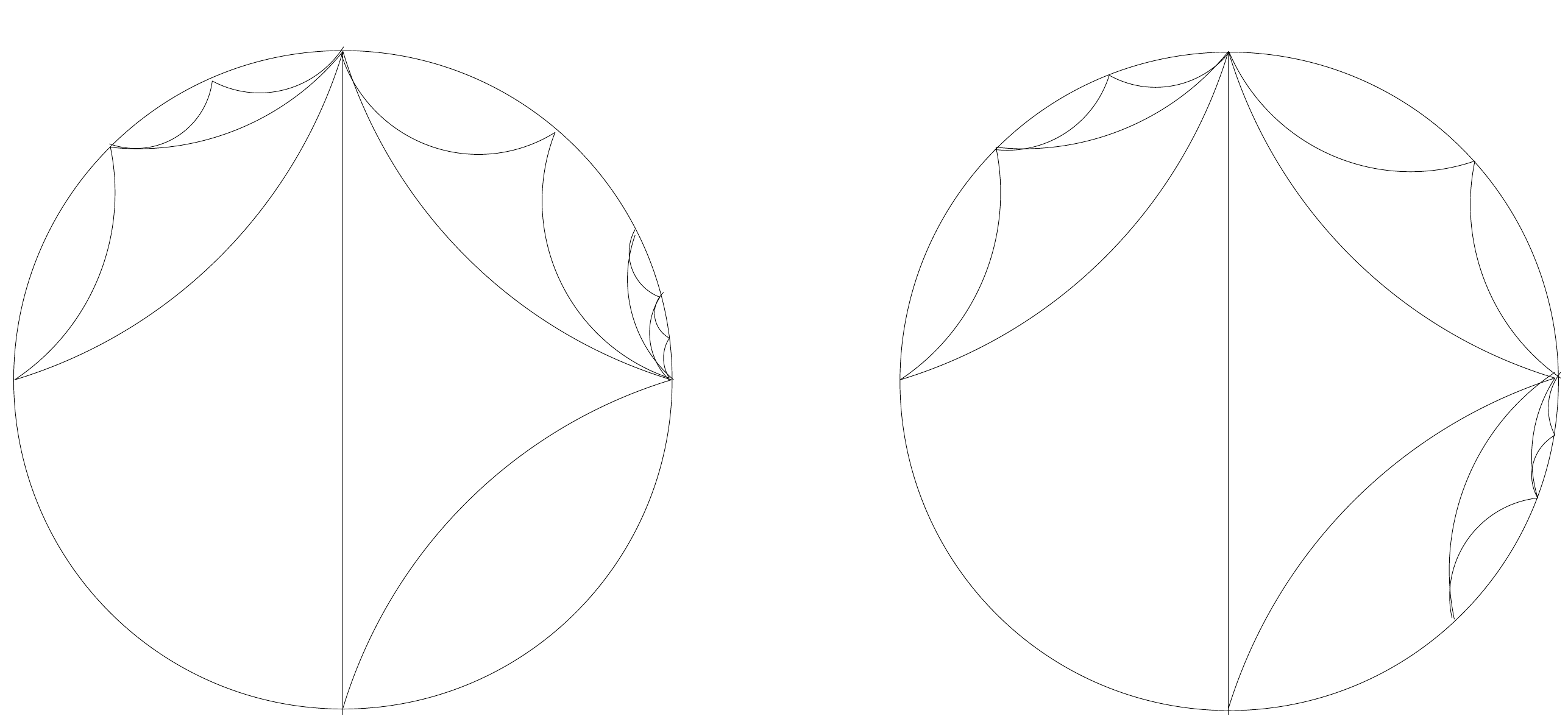} 
\caption{Possible images in $F$ of three consecutive edges of $\hat{Z_5}$ and triangles of $\hat{Z_5}$ adjacent to them.} 
\label{F:1}
\end{center}
\begin{picture}(0,0)(0,0)

\put(40,180){\small $\phi(A)$}
\put(95,200){\small $\phi(B)$}
\put(175,128){\small $\phi(C)$}
\put(170,148){\small $\phi(D)$}

\put(215,180){\small $\phi(A)$}
\put(270,200){\small $\phi(B)$}
\put(350,128){\small $\phi(C)$}
\put(345,100){\small $\phi(D)$}

\end{picture}

\end{figure}

Next we claim that $\phi$ cannot map any two adjacent edges of $Z_5$ into $F$. To find a contradiction, we assume that $\phi$ maps $\overline{ABC}$ into $F$. The locally injectivity of $\phi$ implies that the two edges $\overline{\phi(A)\phi(B)}$ and $\overline{\phi(B)\phi(C)}$ are separated by at least three triangles in $F$. Let $W$ be the deficiency-$1$ multicurve defining $F$, so that  $\phi(A)=W\cup\left\{a\right\}$, $\phi(B)=W\cup\left\{b\right\}$. Then $\phi(C)=W\cup\left\{T^{k}_{b}(a)\right\}, k\geq 3/2$; see Figure~\ref{F:2}. By the previous claim the edges $\overline{\phi(A)\phi(E)}$ and $\overline{\phi(C)\phi(D)}$ are not in $F$. Then $\phi(E)=W^{\prime}\cup\left\{a\right\}$, $\phi(D)=W^{\prime\prime}\cup\left\{T^{k}_{b}(a)\right\}$; see Figure~\ref{F:2}. Since $i(a,T^{k}_{b}(a))>2$ when $k\geq3/2$, $i(\phi(E),\phi(D))> 2$. Hence there is no elementary move from $\phi(D)$ to $\phi(E)$; i.e. $\phi(D)$ and $\phi(E)$ are not connected by an edge.  This is a contradiction, and so we conclude that $\phi(Z_5)$ is an alternating pentagon. Moreover $\phi(\widehat{Z_5})$ is isomorphic to $\widehat{Z_5}$, i.e., $\phi$ is injective on $\widehat{Z_5}$. For any two vertices $v$ and $w$ in $\widehat{Z_5}-Z_5$ that are connected by a length-$2$ path in $X_5$, $\phi(v)$ and $\phi(w)$ are also connected by a length-$2$ path in $\phi(X_5)$. Appealing to the last statement of Lemma~\ref{lem:thick pentagon}, $\phi(X_5)$ is isomorphic to $X_5$, i.e., $\phi$ is injective on $X_5$.
\begin{figure}[ht]
\begin{center}
\includegraphics[height=5cm]{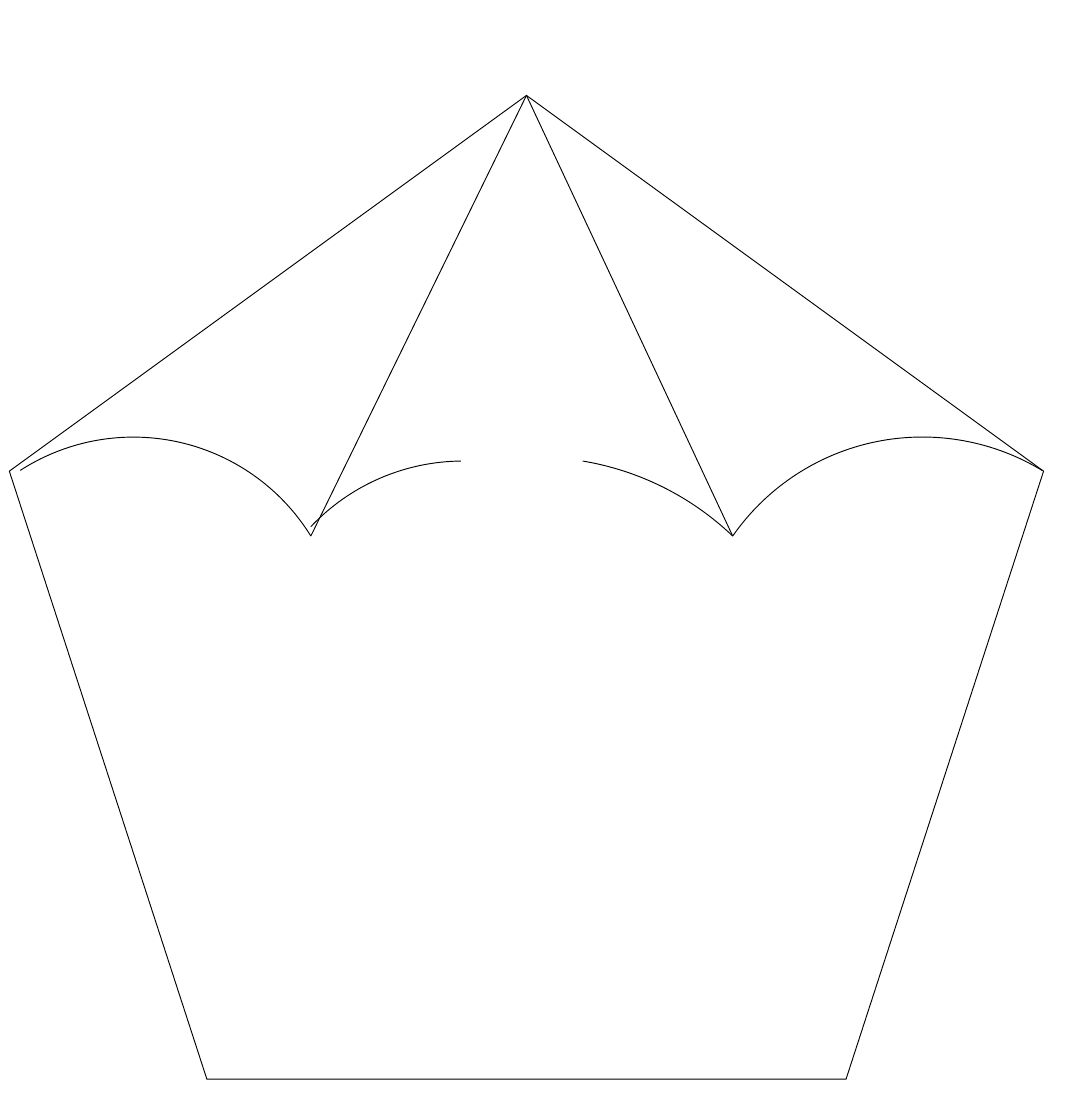} 
\caption{Image of two adjacent edges of $Z_5$ in $F$.} 
\label{F:2}
\end{center}
\begin{picture}(0,0)(0,0)

\put(170,180){\footnotesize $\phi(B)=W\cup\left\{b\right\}$}
\put(55,130){\footnotesize $\phi(A)=W\cup\left\{a\right\}$}
\put(260,130){\footnotesize $\phi(C)=W\cup\left\{T^k_b(a)\right\}$}
\put(75,50){\footnotesize $\phi(E)=W^{\prime}\cup\left\{a\right\}$}
\put(240,50){\footnotesize $\phi(D)=W^{\prime\prime}\cup\left\{T^k_b(a)\right\}$}
\put(145,110){\footnotesize $W\cup\left\{T^{\frac{1}{2}}_b(a)\right\}$}
\end{picture}

\end{figure}
By Lemma~\ref{lem:pentagon}, there exists a deficiency-$2$ multicurve $Q$ and a homeomorphism $f:S_{0,5} \to (S_{0,m}-Q)_0$ to the nontrivial component of $S_{0,m}-Q$ such that $\phi|_{Z_5} = f^Q|_{Z_5}$.  Moreover, $f$ is unique up to precomposing by $e$ (and isotopy) since the pointwise stabilizer of $Z_5$ in $\Mod(S_{0,5})$ is generated by $e$.
So $f^Q(X_5)$ is a subgraph of $\mathcal{P}(S_{0,m})$ containing $f^Q(Z_5)=\phi(Z_5)$. By Lemma~\ref{lem:thick pentagon}, $f^Q(X_5)=\phi(X_5)$. Since ${(f^Q)}^{-1}\circ \phi$ is the identity map on $Z_5$, Lemma~\ref{lem:sym} implies that either ${(f^Q)}^{-1}\circ \phi=\id_{X_5}$ or $e\circ {(f^Q)}^{-1}\circ \phi=\id_{X_5}$. Hence $(f^Q\circ e)_{|{X_5}}={(f\circ e)}^Q_{|{X_5}}=\phi$ or ${f^Q}_{|{X_5}}=\phi$. In either case $\phi$ is induced by an embedding.
\end{proof}
\section{The general case}
\label{sec:general case}
We now proceed to the proof of the main theorem in the general case.
\begin{proof}[Proof of Theorem~\ref{T:main theorem}]
We prove the main theorem by induction on $n$.

Lemma~\ref{lem:base} proves the base case when $n=5$.

Suppose the theorem is true for an $n\geq 5$. Consider an injective simplicial map $\phi: X_{n+1} \to \mathcal{P}(S_{0,m})$. For each chain curve $\alpha_i$ in $S_{0,n+1}, i=1,...,n+1$, recall $X^i_n=X_{n+1}\cap \mathcal{P}_{\alpha_i}(S_{0,n+1})$. By Lemma~\ref{lem:X_n}, $X^i_n\cong X_n$ and $X^i_n\subset \mathcal{P}_{\alpha_i}(S_{0,n+1})\cong \mathcal{P}((S_{0,n+1}-\alpha_i)_0)\cong \mathcal{P}(S_{0,n})$. Given a vertex $u\in X^i_n$, consider $u$ as a pants decomposition of $S_{0,n+1}$ so that $u_{\alpha_i}=u-\left\{\alpha_i\right\}$ is a pants decomposition of $(S_{0,n+1}-\alpha_i)_0$ . Define an injective simplicial map
\[
\phi_i: X^i_n \to \mathcal{P}(S_{0,m})
\]
by $\phi_i(u)=\phi(u)$.
By the induction hypothesis, there is an embedding $f_i:(S_{0,n+1}-\alpha_i)_0 \to S_{0,m}$, unique up to isotopy, such that $f_i$ induces $\phi_i$, i.e., $f_i$ has the following properties:
\begin{enumerate}
\item there is a deficiency-$(n-3)$ multicurve $Q_i$ in $S_{0,m}$ such that $S_{0,m}-Q_i$ has only one nontrivial component $(S_{0,m}-Q_i)_0\cong S_{0,n}$ and $f_i((S_{0,n+1}-\alpha_i)_0)=(S_{0,m}-Q_i)_0$,
\item the simplicial map $f^{Q_i}_i:\mathcal{P}_{\alpha_i}(S_{0,n+1})\to \mathcal{P}(S_{0,m})$ defined by 
\[
f^{Q_i}_i(u)=f_i(u_{\alpha_i})\cup Q_i
\]
satisfies $f_i^{Q_i}|_{X_n^i}=\phi_i$.
\end{enumerate}
Under all the hypotheses above, we prove the following three lemmas.
\begin{lem}
\label{lem:1}
 If $\alpha_i$ and $\alpha_j$ are disjoint chain curves then $i(Q_i,Q_j)=0$.
\end{lem}
\begin{proof}
Since $n+1\geq 6$, $(S_{0,n+1}-\left\{\alpha_i,\alpha_j\right\})_0\cong S_{0,n-1}$ with $n-1\geq 4$. Since $\left\{\alpha_i,\alpha_j\right\}\subset \Gamma_{n+1}$, $Z_{n+1}\cap \mathcal{P}_{\left\{\alpha_i,\alpha_j\right\}}(S_{0,n+1})\neq\emptyset$ and contains an edge $e$. Since $e$ is an edge in 
\[
\mathcal{P}_{\left\{\alpha_i,\alpha_j\right\}}(S_{0,n+1})=\mathcal{P}_{\left\{\alpha_i\right\}}(S_{0,n+1})\cap \mathcal{P}_{\left\{\alpha_j\right\}}(S_{0,n+1}),
\]
$\phi(e)=\phi_i(e)\subset \mathcal{P}_{Q_i}(S_{0,m})$ and $\phi(e)=\phi_j(e)\subset \mathcal{P}_{Q_j}(S_{0,m})$. Thus 
\[
\mathcal{P}_{Q_i}(S_{0,m})\cap \mathcal{P}_{Q_j}(S_{0,m})\neq\emptyset.
\]
Hence $i(Q_i,Q_j)=0$.
\end{proof}
\begin{lem}
\label{lem:2}
If $\alpha_i,\alpha_j,\alpha_k$ are pairwise disjoint chain curves and $u\in Z_{n+1}\cap \mathcal{P}_{\left\{\alpha_i,\alpha_j,\alpha_k\right\}}(S_{0,n+1})$ is any vertex, then the $n-2$ thick edges in $\sth_{Z_{n+1}}(u)$ from Lemma~\ref{lem:edges of Z_n} are mapped into $n-2$ distinct Farey graphs by $\phi$.
\end{lem}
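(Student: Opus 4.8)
The plan is to reduce the statement about thick edges to one about their central edges, and then to play the three inductive embeddings $f_i, f_j, f_k$ against one another by a pigeonhole argument. First I would record a structural fact: every triangle of $\mathcal{P}(S_{0,m})$ that contains a given edge lies in the unique Farey graph determined by that edge. Indeed, if $P = Q\cup\{a\}$ and $P' = Q\cup\{a'\}$ are the endpoints of the edge and $P''$ is a third vertex adjacent to both, then counting shared curves forces $P'' = Q\cup\{a''\}$, so $P''$ lies in $\mathcal{P}_Q(S_{0,m})$. Since $\phi$ is an injective simplicial map it carries triangles to triangles, so each thick edge $\hat e$ is sent into the single Farey graph containing $\phi(e)$. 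Thus the lemma is equivalent to: the central edges $e_1,\dots,e_{n-2}$ of $\st_{Z_{n+1}}(u)$ have images lying in pairwise distinct Farey graphs of $\mathcal{P}(S_{0,m})$.

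Next, write $c_l\in u$ for the curve changed by the elementary move along $e_l$; the $c_l$ are pairwise distinct, and by Lemma~\ref{lem:edges of Z_n} applied to $S_{0,n+1}$ the $e_l$ already lie in $n-2$ distinct Farey graphs of $\mathcal{P}(S_{0,n+1})$. Since $u$ contains $\alpha_i,\alpha_j,\alpha_k$, fix any pair $e_l\neq e_{l'}$; because $\{c_l,c_{l'}\}$ has at most two elements while $\{\alpha_i,\alpha_j,\alpha_k\}$ has three, I can choose $\alpha\in\{\alpha_i,\alpha_j,\alpha_k\}$ with $\alpha\notin\{c_l,c_{l'}\}$. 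Both endpoints of $e_l$ and of $e_{l'}$ then retain $\alpha$, so $e_l,e_{l'}\subset\mathcal{P}_\alpha(S_{0,n+1})$, whence $e_l,e_{l'}\in X_{n+1}\cap\mathcal{P}_\alpha(S_{0,n+1})=X_n^\alpha$. Viewing $u-\{\alpha\}$ as a vertex of $Z_n$ inside $\mathcal{P}_\alpha(S_{0,n+1})\cong\mathcal{P}(S_{0,n})$, Lemma~\ref{lem:edges of Z_n} shows $e_l$ and $e_{l'}$ lie in distinct Farey graphs of $\mathcal{P}_\alpha(S_{0,n+1})$. Now $\phi|_{X_n^\alpha}=f_\alpha^{Q_\alpha}$ is the restriction of the isomorphism $\mathcal{P}_\alpha(S_{0,n+1})\cong\mathcal{P}_{Q_\alpha}(S_{0,m})$ induced by $f_\alpha$, which sends distinct Farey graphs to distinct Farey graphs; and distinct Farey graphs of $\mathcal{P}_{Q_\alpha}(S_{0,m})$ remain distinct as Farey graphs of $\mathcal{P}(S_{0,m})$ because the Farey graph containing an edge is unique. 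Hence $\phi(e_l)$ and $\phi(e_{l'})$ lie in distinct Farey graphs, and since the pair was arbitrary, all $n-2$ images are pairwise distinct.

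The main obstacle, and the reason the hypothesis demands three pairwise disjoint chain curves rather than one or two, is the single edge of $\st_{Z_{n+1}}(u)$ whose move changes a chain curve $\alpha_i$ itself: this edge is not contained in $\mathcal{P}_{\alpha_i}(S_{0,n+1})$ and so is invisible to $f_i$. One embedding controls only the $n-3$ edges that fix its chain curve, and two embeddings would leave the pair of their own changing edges uncontrolled; with three curves, every pair of central edges simultaneously fixes at least one $\alpha\in\{\alpha_i,\alpha_j,\alpha_k\}$, so each pair is seen inside a single $X_n^\alpha$ where the inductive embedding and Lemma~\ref{lem:edges of Z_n} apply. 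I expect the only delicate point to be the compatibility of ``distinct Farey graphs in $\mathcal{P}_{Q_\alpha}(S_{0,m})$'' with ``distinct Farey graphs in $\mathcal{P}(S_{0,m})$,'' which follows from the uniqueness remark after Lemma~\ref{lem:Farey}.
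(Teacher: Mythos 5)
Your proposal is correct and takes essentially the same route as the paper: the same pigeonhole observation that any two edges (or thick edges) of $\sth_{Z_{n+1}}(u)$ both survive deleting some $\alpha\in\{\alpha_i,\alpha_j,\alpha_k\}$, hence lie in a common $X^{\alpha}_n\subset\mathcal{P}_{\alpha}(S_{0,n+1})$, where $\phi$ agrees with $f^{Q_\alpha}_\alpha$, which by Lemma~\ref{lem:edges of Z_n} carries them into distinct Farey graphs of $\mathcal{P}_{Q_\alpha}(S_{0,m})$. Your extra verifications (reducing thick edges to their central edges, and the persistence of distinctness of Farey graphs from $\mathcal{P}_{Q_\alpha}(S_{0,m})$ into $\mathcal{P}(S_{0,m})$ via uniqueness of the Farey graph containing an edge) merely make explicit what the paper leaves implicit.
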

\begin{proof}
Lemma~\ref{lem:edges of Z_n} shows that the $n-2$ thick edges in $\sth_{Z_{n+1}}(u)$ are contained in distinct Farey graphs. 
Then $\sth_{X_{n+1}}(u) \cap \mathcal{P}_{\alpha_i}(S_{0,n+1})$ contains $n-3$ thick edges and $f^{Q_i}_i$ maps the $n-3$ distinct Farey graphs containing these thick edges to distinct Farey graphs in $\mathcal{P}_{Q_i}(S_{0,m})$. The same is true if $i$ is replaced by $j$ or $k$.
%
%

Now for any two thick edges $\hat{e}_1$ and $\hat{e}_2$ in $\sth_{Z_{n+1}}(u)$, $\hat{e}_1$ and $\hat{e}_2$ are both contained in at least one of $\mathcal{P}_{\alpha_i}(S_{0,n+1})$, $\mathcal{P}_{\alpha_j}(S_{0,n+1})$ or $\mathcal{P}_{\alpha_k}(S_{0,n+1})$. 
Assume that $\hat{e}_1$ and $\hat{e}_2$ are contained in $\mathcal{P}_{\alpha_i}(S_{0,n+1})$, then $f^{Q_i}_i(\hat{e}_1)=\phi(\hat{e}_1)$ and $f^{Q_i}_i(\hat{e}_2)=\phi(\hat{e}_2)$ are in different Farey graphs.
\end{proof}
\begin{lem}
\label{lem:3}
If $\alpha_i,\alpha_j,\alpha_k$ are pairwise disjoint chain curves then $Q_i\neq Q_j\neq Q_k\neq Q_i$.
\end{lem}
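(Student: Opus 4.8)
The plan is to argue by contradiction, ruling out any coincidence among the three multicurves; by the symmetry of the three indices it suffices to show $Q_i\neq Q_j$. So I would suppose $Q_i=Q_j=:Q$ and fix a vertex $u\in Z_{n+1}\cap \mathcal{P}_{\{\alpha_i,\alpha_j,\alpha_k\}}(S_{0,n+1})$ (which exists since $\{\alpha_i,\alpha_j,\alpha_k\}\subset\Gamma_{n+1}$). Consider the two thick edges of $\sth_{Z_{n+1}}(u)$ performing the elementary moves on $\alpha_i$ and on $\alpha_j$ respectively, each keeping all other curves of $u$, in particular $\alpha_k$. The move on $\alpha_i$ lies in $\mathcal{P}_{\alpha_j}(S_{0,n+1})$ and the move on $\alpha_j$ lies in $\mathcal{P}_{\alpha_i}(S_{0,n+1})$, so reading $\phi$ off through $f_j^{Q_j}$ and through $f_i^{Q_i}$ identifies the curve of $\phi(u)$ that each image changes as $f_j(\alpha_i)$ and $f_i(\alpha_j)$ respectively; the Farey graphs of the two images are therefore $\mathcal{P}_{\phi(u)-\{f_j(\alpha_i)\}}$ and $\mathcal{P}_{\phi(u)-\{f_i(\alpha_j)\}}$. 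By Lemma~\ref{lem:2} these Farey graphs are distinct, which forces $f_i(\alpha_j)\neq f_j(\alpha_i)$. This is exactly the place where the third curve $\alpha_k$ is genuinely needed: it is what makes Lemma~\ref{lem:2} applicable to this pair of edges.

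Next I would exploit that $\phi$ agrees with both induced maps on the overlap $X^{ij}_{n-1}=X_{n+1}\cap \mathcal{P}_{\{\alpha_i,\alpha_j\}}(S_{0,n+1})$. For any vertex $v\in X^{ij}_{n-1}$ both $\alpha_i$ and $\alpha_j$ belong to $v$, so $\phi(v)=f_i(v-\alpha_i)\cup Q$ contains $f_i(\alpha_j)$ while $\phi(v)=f_j(v-\alpha_j)\cup Q$ contains $f_j(\alpha_i)$. Since $\phi(v)$ is a pants decomposition these two distinct curves are disjoint from each other and from $Q$, so $Q'=Q\cup\{f_i(\alpha_j),f_j(\alpha_i)\}$ is a multicurve with $|Q'|=|Q|+2$ contained in $\phi(v)$ for every $v\in X^{ij}_{n-1}$. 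Hence $Q'$ is contained in the common intersection $\bigcap_{v}\phi(v)$ of all the vertices of $\phi(X^{ij}_{n-1})$.

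The contradiction then comes from bounding that common intersection from above. When $n+1\geq 7$ we have $X^{ij}_{n-1}\cong X_{n-1}$ by Lemma~\ref{lem:X_n}(3), so the induction hypothesis applies to $\phi|_{X^{ij}_{n-1}}$: it is induced by an embedding with a deficiency-$(n-4)$ multicurve $\tilde Q$, so every vertex of $\phi(X^{ij}_{n-1})$ has the form $g(w)\cup\tilde Q$. Because the vertices of $X_{n-1}$ share no curve (already the two vertices $v_1,v_2$ of $Z_{n-1}$ from the proof of Lemma~\ref{lem:connected} are disjoint as pants decompositions), the common intersection collapses to exactly $\tilde Q$, with $|\tilde Q|=(m-3)-(n-4)=m-n+1$. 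Since $|Q|=m-n$, this gives $|Q'|=m-n+2>m-n+1=\bigl|\bigcap_v\phi(v)\bigr|$, contradicting $Q'\subset\bigcap_v\phi(v)$. In the base case $n+1=6$ the induction hypothesis is unavailable, but there $X^{ij}_{4}$ already contains an edge $\{v_1,v_2\}$, whose image $\{\phi(v_1),\phi(v_2)\}$ is an edge, so $\bigl|\bigcap_v\phi(v)\bigr|\leq|\phi(v_1)\cap\phi(v_2)|=m-4=m-n+1$ and the identical numerical contradiction applies.

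The step I expect to be the crux is this last upper bound on $\bigcap_v\phi(v)$: one must know not merely that $\phi|_{X^{ij}_{n-1}}$ is induced by an embedding, but also that the finite model $X_{n-1}$ is spread out enough that its vertices have no common curve, so the induced image cannot secretly lie in a smaller stratum $\mathcal{P}_{Q'}$. The companion subtlety is extracting $f_i(\alpha_j)\neq f_j(\alpha_i)$ from Lemma~\ref{lem:2}, since it is precisely this strict inequality that gives $Q'$ its full size $|Q|+2$ and rules out the degenerate possibility $Q'=\tilde Q$.
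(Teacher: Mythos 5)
Your proof is correct, but it reaches the contradiction by a genuinely different mechanism than the paper. Both arguments open identically: assume $Q_i=Q_j$, fix $u\in Z_{n+1}\cap \mathcal{P}_{\{\alpha_i,\alpha_j,\alpha_k\}}(S_{0,n+1})$, and invoke Lemma~\ref{lem:2} (correctly identifying that $\alpha_k$ enters only through that lemma's hypothesis). But the paper then stays entirely local at $u$: since $Q_i=Q_j$, all $n-2$ thick edges of $\sth_{X_{n+1}}(u)$ map into $\mathcal{P}_{Q_i}(S_{0,m})\cong\mathcal{P}(S_{0,n})$, and by Lemma~\ref{lem:2} they land in $n-2$ distinct Farey graphs through the single vertex $\phi(u)$ --- yet a vertex of $\mathcal{P}(S_{0,n})$ lies on only $n-3$ Farey graphs, giving $n-2>n-3$ at once, uniformly in $n$ and with no further appeal to induction. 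You instead extract from Lemma~\ref{lem:2} only the distinctness of two Farey graphs, yielding $f_i(\alpha_j)\neq f_j(\alpha_i)$, then build the oversized multicurve $Q'=Q\cup\{f_i(\alpha_j),f_j(\alpha_i)\}$ of size $m-n+2$ inside every $\phi(v)$, $v\in X^{ij}_{n-1}$, and cap $\bigl|\bigcap_v\phi(v)\bigr|$ at $m-n+1$ via the theorem at level $n-1$ (using, correctly, that $v_1\cap v_2=\emptyset$ as pants decompositions, so the vertices of $X_{n-1}$ share no curve), with a separate single-edge count for $n+1=6$. I verified the details: $f_i(\alpha_j),f_j(\alpha_i)\in\phi(u)\setminus Q$ since they sit in a pants decomposition alongside $Q$, the edge $\{\{\alpha_1,\alpha_3,\alpha_5\},\{\alpha_1,\alpha_5,\beta_3\}\}$-type edge you need in the base case does lie in $Z_6\cap\mathcal{P}_{\{\alpha_i,\alpha_j\}}$, and adjacent pants decompositions share exactly $m-4$ curves, so the counts close. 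The trade-off: your route uses less of Lemma~\ref{lem:2} and produces the useful byproduct $f_i(\alpha_j)\neq f_j(\alpha_i)$ (information the paper essentially re-derives later via the curves $q_1,q_3,q_5$ in the $S_{0,6}$ step), but it imports the full strength of the theorem one induction level down, which forces a strong-induction reading of the scheme --- strictly beyond the stated hypothesis ``the theorem is true for $n$,'' though the paper itself tacitly does the same in the $n+1\geq 7$ case --- and it costs you the base-case split at $n+1=6$, which the paper's local Farey-graph count avoids entirely.
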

\begin{proof}
%

Suppose $Q_i=Q_j$. 
Let $u$ be a vertex in $Z_{n+1}\cap \mathcal{P}_{\left\{\alpha_i,\alpha_j,\alpha_k\right\}}(S_{0,n+1})$. %

As in the previous proof, $f_i^{Q_i}$ and $f_j^{Q_j}$ map the $n-3$ thick edges of $\sth_{X_{n+1}}(u) \cap \mathcal{P}_{\alpha_i}(S_{0,n+1})$ and $\sth_{X_{n+1}}(u) \cap \mathcal{P}_{\alpha_j}(S_{0,n+1})$ into $n-3$ thick edges in $\mathcal{P}_{Q_i}(S_{0,m})$ and $\mathcal{P}_{Q_j}(S_{0,m})=\mathcal{P}_{Q_i}(S_{0,m})$, respectively.
There are $n-4$ thick edges in $\sth_{X_{n+1}}(u) \cap \mathcal{P}_{\alpha_i}(S_{0,n+1}) \cap \mathcal{P}_{\alpha_j}(S_{0,n+1})$ and $f^{Q_i}_i$ agrees with $f^{Q_j}_j$ on these thick edges because $f^{Q_i}_i(v)=\phi(v)=f^{Q_j}_j(v)$, for any vertex $v\in X^i_n\cap X^j_n$. 
Let $\widehat{e_i}$ be the thick edge in $\sth_{X_{n+1}}(u) \cap \mathcal{P}_{\alpha_i}(S_{0,n+1})$ but not in $\sth_{X_{n+1}}(u)\cap \mathcal{P}_{\alpha_j}(S_{0,n+1})$ 
and let $\widehat{e_j}$ be the thick edge in $\sth_{X_{n+1}}(u) \cap \mathcal{P}_{\alpha_j}(S_{0,n+1})$ but not in $\sth_{X_{n+1}}(u)\cap \mathcal{P}_{\alpha_i}(S_{0,n+1})$. 
Since $Q_i$ is a deficiency-$(n-3)$ multicurve, there are $n-3$ Farey graphs in $\mathcal{P}_{Q_i}(S_{0,m})=\mathcal{P}_{Q_j}(S_{0,m})$ that contain $f^{Q_i}_i(u)=f^{Q_j}_j(u)$.
However, $f_i^{Q_i}(\widehat e_i)$, $f_j^{Q_j}(\widehat e_j)$ and the $n-4$ thick edges in $\sth_{X_{n+1}}(u) \cap \mathcal{P}_{\alpha_i}(S_{0,n+1}) \cap \mathcal{P}_{\alpha_j}(S_{0,n+1})$ above all map into distinct Farey graphs by Lemma 5.2.  Since this is $n-2 > n-3$, we obtain a contradiction.
\end{proof}
\begin{figure}[ht]
\begin{center}
\includegraphics[height=10cm]{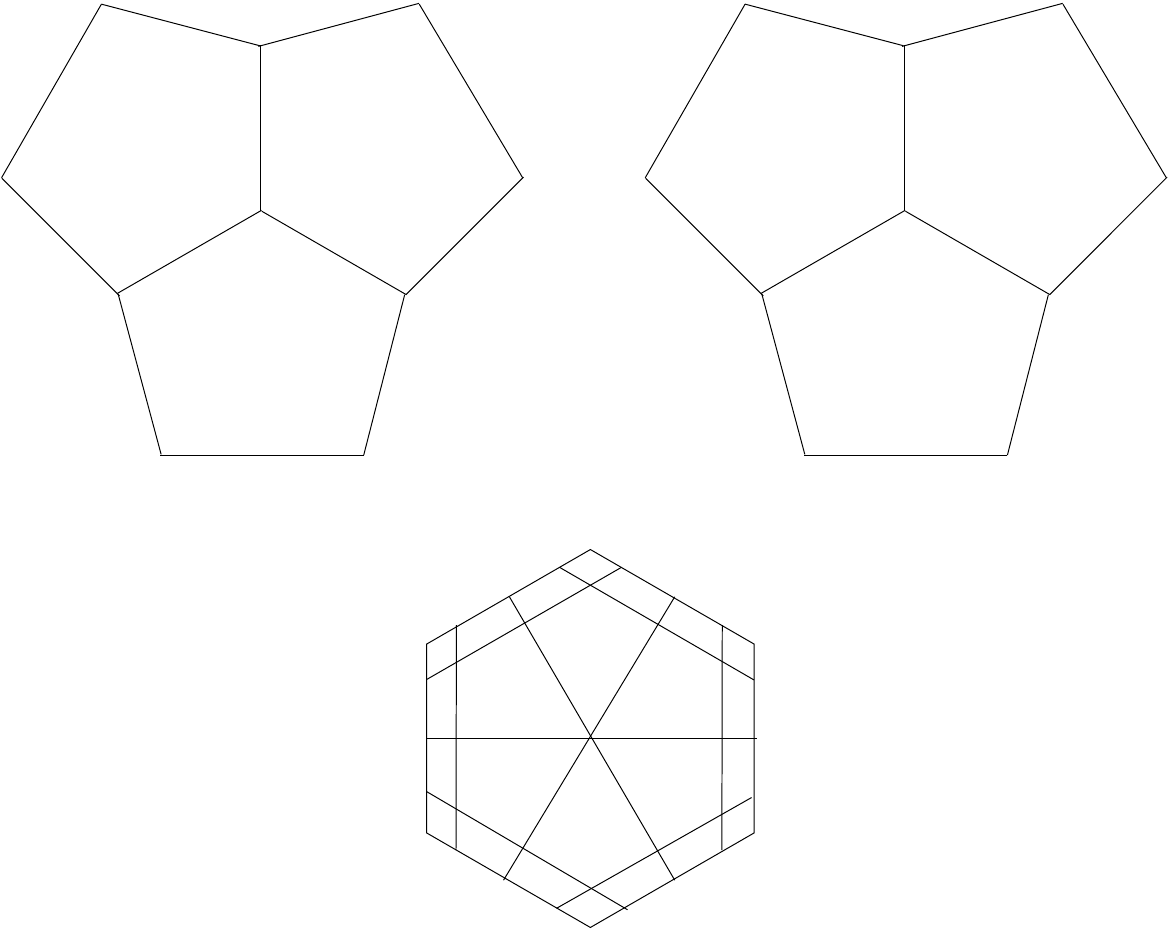} 
\caption{(Upper) $Z_6$ as a union of $Z^1_5\cup Z^3_5\cup Z^5_5$ on the left and $Z^2_5\cup Z^4_5\cup Z^6_5$ on the right, the two subgraphs share three edges as labelled; (lower) $S_{0,6}$ with curves in $\Gamma_6$.} 
\label{F:X6}
\end{center}
\begin{picture}(0,0)(0,0)

\put(70,358){\footnotesize $\left\{\alpha_1,\alpha_3,\beta_2\right\}$}
\put(140,365){\footnotesize $\left\{\alpha_1,\alpha_4,\beta_2\right\}$}
\put(170,290){\footnotesize $\left\{\alpha_1,\alpha_4,\beta_3\right\}$}
\put(140,265){\footnotesize $\left\{\alpha_1,\alpha_5,\beta_3\right\}$}
\put(70,275){\footnotesize $\left\{\alpha_1,\alpha_3,\alpha_5\right\}$}

\put(267,358){\footnotesize $\left\{\alpha_4,\alpha_6,\beta_2\right\}$}
\put(337,365){\footnotesize $\left\{\alpha_1,\alpha_4,\beta_2\right\}$}
\put(320,307){\footnotesize $\left\{\alpha_1,\alpha_4,\beta_3\right\}$}
\put(337,265){\footnotesize $\left\{\alpha_4,\alpha_2,\beta_3\right\}$}
\put(267,275){\footnotesize $\left\{\alpha_2,\alpha_4,\alpha_6\right\}$}

\put(175,190){\footnotesize $\alpha_1$}
\put(235,175){\footnotesize $\alpha_2$}
\put(250,120){\footnotesize $\alpha_3$}
\put(210,80){\footnotesize $\alpha_4$}
\put(148,90){\footnotesize $\alpha_5$}
\put(135,150){\footnotesize $\alpha_6$}
\put(220,185){\footnotesize $\beta_1$}
\put(250,135){\footnotesize $\beta_2$}
\put(225,85){\footnotesize $\beta_3$}

\put(120,320){$Z^1_5$}
\put(55,320){$Z^3_5$}
\put(90,240){$Z^5_5$}
\put(250,320){$Z^6_5$}
\put(320,320){$Z^4_5$}
\put(290,240){$Z^2_5$}

\end{picture}
\end{figure}
With the hypothesis as in Lemma~\ref{lem:3}, we must have
\[
Q_i\cap Q_j=Q_i\cap Q_k=Q_j\cap Q_k=Q_i\cap Q_j\cap Q_k=Q,
\]
 $Q$ has deficiency $n-2$ and there is a connected complexity $n-2$ component $(S_{0,m}-Q)_0$.

The proofs are different for $n+1 = 6$ and $n+1 \geq 7$. We first prove the theorem for $S_{0,n+1}=S_{0,6}$. We label the six chain curves by $\alpha_i$ for $i=1,...,6$ and the three non-chain curves in $\Gamma_6$ by $\beta_i$ for $i=1,2,3$ as in Figure~\ref{F:X6}; also see the figure for the picture of $Z_6$ as a union of $Z^1_5\cup Z^3_5\cup Z^5_5$ and $Z^2_5\cup Z^4_5\cup Z^6_5$, where $Z^i_5=Z_6\cap \mathcal{P}_{\alpha_i}(S_{0,6})=X^i_5\cap Z_6$. Note that, in this case, $Q_i$ is a deficiency-$2$ multicurve for all $i=1,...,6$.

Let $i\neq j\in \{1,3,5\}$ or $i\neq j\in \{2,4,6\}$. Define $F_{i,j}:S_{0,6}\to S_{0,m}$ by
\begin{displaymath}
   F_{i,j} = \left\{
     \begin{array}{lr}
       f_i & \mbox{on}~~(S_{0,6}-\alpha_i)_0\\
       f_j & \mbox{on}~~(S_{0,6}-\alpha_j)_0\\
     \end{array}
   \right. .
\end{displaymath} 

We claim that $f_i$ and $f_j$ agree on $(S_{0,6}-\left\{\alpha_i,\alpha_j\right\})_0$ so $F_{i,j}$ is well-defined. We prove the claim in the case when $(i,j)=(1,5)$. For other cases the proof is similar.

$X^1_5$ and $X^5_5$ share one thick edge $\hat{e}$ having endpoints $\left\{\alpha_1,\alpha_5,\alpha_3\right\}$ and $\left\{\alpha_1,\alpha_5,\beta_3\right\}$. 
Moreover, $f^{Q_1}_1$ agrees with $f^{Q_5}_5$ on $\hat{e}$ as they are both equal to $\phi$.
Then we have that $f^{Q_1}_1$ and $f^{Q_5}_5$ agree on $\mathcal{P}_{\alpha_1}(S_{0,6})\cap \mathcal{P}_{\alpha_5}(S_{0,6})=\mathcal{P}_{\left\{\alpha_1,\alpha_5\right\}}(S_{0,6})\cong \mathcal{P}(S_{0,4})$. 
Hence 
\[
f_1((S_{0,6}-\left\{\alpha_1,\alpha_5\right\})_0)=f_5((S_{0,6}-\left\{\alpha_1,\alpha_5\right\})_0),
\]
and $f^{-1}_5\circ f_1$ is either the identity or one of the three hyperelliptic involutions on $(S_{0,6}-\left\{\alpha_1,\alpha_5\right\})_0$, see Figure~\ref{F:hyperelliptic}. We will show that the latter are not possible by proving that $f^{-1}_5\circ f_1$ is the identity permutation on the boundary component of $(S_{0,6}-{\alpha_1,\alpha_5})_0$. 
Since $\alpha_1,\alpha_3,\alpha_5$ are pairwise disjoint chain curves, Lemma~\ref{lem:1} and Lemma~\ref{lem:3} show that, for any $i\neq j\in\left\{1,3,5\right\}$, $i(Q_i,Q_j)=0$ and $Q_i\neq Q_j$ . 
Hence, for any $i\neq j\in\left\{1,3,5\right\}$, the deficiency of $Q_i\cup Q_j$ is $1$ and the symmetric difference $Q_i\triangle Q_j$ contains two simple closed curves.
Let
\[
q_1\in Q_1-(Q_3\cup Q_5),~~q_5\in Q_5-(Q_1\cup Q_3),~~q_3\in Q_3-(Q_1\cup Q_5),
\]
which are three distinct simple closed curves on $S_{0,m}$. We note that $q_3$ is a closed curve on $(S_{0,m}-(Q_1\cup Q_5))_0\cong S_{0,4}$ and $q_3$ cannot separate $q_1$ and $q_5$ since $q_1$ and $q_5$ are curves on $(S_{0,m}-Q_3)_0\cong S_{0,5}$; see Figure~\ref{F:image}.
\begin{figure}[ht]
\begin{center}
\includegraphics[height=5cm]{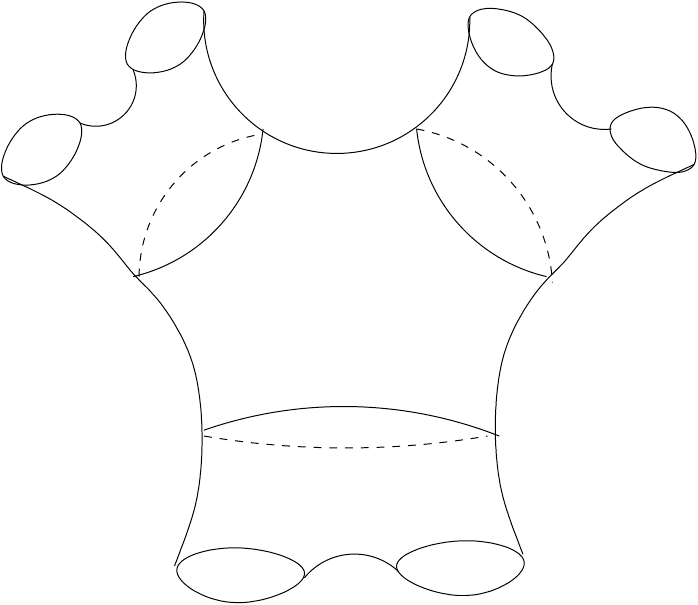} 
\caption{$f_1((S_{0,6}-\alpha_1)_0)\cup f_5((S_{0,6}-\alpha_5)_0)=(S_{0,m}-Q)_0$, together with the three curves $q_1,q_3,q_5$.} 
\label{F:image}
\end{center}
\begin{picture}(0,0)(0,0)

\put(170,140){\footnotesize $q_1$}
\put(208,140){\footnotesize $q_5$}
\put(190,100){\footnotesize $q_3$}

\end{picture}
\end{figure}

Since $f^{Q_1}_1$ and $f^{Q_5}_5$ agree on $\mathcal{P}_{\left\{\alpha_1,\alpha_5\right\}}(S_{0,6})\cong \mathcal{P}(S_{0,4})$, $f_1$ and $f_5$ map any simple closed curves on $(S_{0,6}-\left\{\alpha_1,\alpha_5\right\})_0$ to simple closed curves on $(S_{0,m}-(Q_1\cup Q_5))_0$. 
Thus 
$f_1(\alpha_5)$ is a curve in $(S_{0,m}-(Q_1))_0$ but not in $(S_{0,m}-(Q_1 \cup Q_5))_0$, and so $f_1(\alpha_5) = q_5$.  Similarly, $f_5(\alpha_1) = q_1$ and
\[ f_1(\alpha_3) = q_3 = f_5(\alpha_3).\]
%
%
Therefore $f_1(\alpha_1)=q_1$ and $f^{-1}_5\circ f_1(\alpha_1)=\alpha_1$.
We conclude that $f^{-1}_5\circ f_1$ is the identity permutation on the boundary component of $(S_{0,6}-\left\{\alpha_1,\alpha_5\right\})_0$ hence cannot be one of the hyperelliptic involutions. So $f_1$ and $f_5$ agree on $(S_{0,6}-\left\{\alpha_1,\alpha_5\right\})_0$.

Observe that $F_{1,5}=F_{1,3}=F_{3,5}$ and $F_{2,4}=F_{2,6}=F_{4,6}$. For instance, to see that $F_{1,5}=F_{1,3}$, we note that $f_3$ agrees with $f_5$ on $(S_{0,6}-\left\{\alpha_3,\alpha_5\right\})_0$. 

Define the common maps \[
f_{odd}=F_{1,5}=F_{1,3}=F_{3,5} ~~\mbox{and}~~f_{even}=F_{2,4}=F_{2,6}=F_{4,6}.
\]
We have that $f_{odd}$ induces the restriction map of $\phi$ on $X^1_5\cup X^3_5\cup X^5_5$ and  $f_{even}$ induces the restriction map of $\phi$ on $X^2_5\cup X^4_5\cup X^6_5$. 

Let $Q_{odd}=Q_1\cap Q_3\cap Q_5$ and $Q_{even}=Q_2\cap Q_4\cap Q_6$.
We show that $ (S_{0,m}-Q_{odd})_0=(S_{0,m}-Q_{even})_0$. 
As observed after Lemma~\ref{lem:3}, $Q_{odd}$ and $Q_{even}$ are deficiency-$3$ multicurves and 
\[
f_{odd}(S_{0,6})=(S_{0,m}-Q_{odd})_0~~\mbox{and}~~f_{even}(S_{0,6})=(S_{0,m}-Q_{even})_0.
\] 
Since $f^{Q_i}_i$ and $f^{Q_j}_j$ agree on the thick edge $\hat{e}_{i,j}\in X^i_5\cap X^j_5$ when $(i,j)\in\left\{(1,4),(3,6),(2,5)\right\}$, 
\[
f^{Q_{odd}}_{odd}(\hat{e}_{i,j})=f^{Q_{even}}_{even}(\hat{e}_{i,j})\in \mathcal{P}_{Q_{odd}}(S_{0,m})\cap \mathcal{P}_{Q_{even}}(S_{0,m})\neq\emptyset.
\]
Hence $i(Q_{odd},Q_{even})=0$. Moreover we have that $f_{odd}(\beta_k)=f_{even}(\beta_k)$, for each non-chain curve $\beta_k, k=1,2,3$ since $\beta_k$'s are interchanged in an elementary move that defines the edge $e_{i,j}$. 
The union $\bigcup^3_{k=1}f_{odd}(\beta_k)=\bigcup^3_{k=1}f_{even}(\beta_k)$ fills $(S_{0,m}-Q_{odd})_0$ and $(S_{0,m}-Q_{even})_0$.
Since $(S_{0,m}-Q_{odd})_0$ is the unique subsurface filled by $\bigcup^3_{k=1}f_{odd}(\beta_k)$, we conclude that $(S_{0,m}-Q_{odd})_0=(S_{0,m}-Q_{even})_0$.

Next, we show that $f_{odd}=f_{even}$.
Recall that we are considering $S_{0,6}$ as the double of a hexagon with vertices removed.
Let $r$ and $e$ be homeomorphisms of $S_{0,6}$ induced by rotating the hexagon by $\pi$ and exchanging the hexagons, respectively.
Since $f^{-1}_{odd}\circ f_{even}$ preserves each non-chain curve $\beta_k, k=1, 2, 3$, on $S_{0,6}$, $f^{-1}_{odd}\circ f_{even}$ is either the identity map, $r$, $e$, or $r\circ e$. 
We will show that only the first case is possible. 
The homeomorphism $r$ induces a simplicial map on $P(S_{0,6})$ which exchanges the vertices $\left\{\alpha_1,\alpha_3,\alpha_5\right\}$ and $\left\{\alpha_2,\alpha_4,\alpha_6\right\}$. 
If $f^{-1}_{odd}\circ f_{even}=r$ then $\phi(\left\{\alpha_1,\alpha_3,\alpha_5\right\})=\phi(\left\{\alpha_2,\alpha_4,\alpha_6\right\})$  which is a contradiction to the fact that $\phi$ is injective. 
Hence $f^{-1}_{odd}\circ f_{even}\neq r$.
Since $e$ and $r\circ e$ reverse orientation on each $\beta_i, i=1,2,3$, they do not induce the identity map on the thick edge $\hat{e}_{i,j}, (i,j)\in\left\{(1,4),(3,6),(2,5)\right\}$. So $f^{-1}_{odd}\circ f_{even}\neq e$ or $r\circ e$ because $f^{-1}_{odd}\circ f_{even}$ induces the identity map on those thick edges. Therefore $f^{-1}_{odd}\circ f_{even}$ is the identity map on $S_{0,6}$. We conclude that $f_{odd}=f_{even}$.

Let $f = f_{odd} = f_{even}$ and $Q = Q_{odd} = Q_{even}$.  We show that $f:S_{0,6} \to S_{0,m}$ is the unique $\pi_1$--injective embedding with $\phi = f^Q$.
The proof follows the same idea as in the proof of $f_{odd}=f_{even}$. So we give a brief explanation here.
Suppose $h:S_{0,6}\to S_{0,m}$ is a $\pi_1$-injective embedding that induces $h^W=\phi$ for some deficiency-$3$ multicurve $W$ on $S_{0,m}$. 
Since $f^Q=\phi=h^W$, $i(Q,W)=0$ moreover $f(\beta_i)=h(\beta_i)$ for each non-chain curve $\beta_i, i=1,2,3$. 
Since $(S_{0,m}-Q)_0$ and $(S_{0,m}-W)_0$ are unique subsurfaces filled by $\bigcup^3_{i=1}f(\beta_i)=\bigcup^3_{i=1}h(\beta_i)$, $(S_{0,m}-Q)_0=(S_{0,m}-W)_0$ and hence $W = Q$.
Since $h^{-1}\circ f$ preserves each non-chain curve on $S_{0,6}$ and induces the identity map on $X_6$, $h^{-1}\circ f$ is the identity map. Therefore $f=h$.


 Next we prove the theorem for $S_{0,n+1}, n+1\geq 7$.
\\We define an embedding of $S_{0,n+1}$ to $S_{0,m}$ that induces $\phi$. Let $\alpha_i$ and $\alpha_j$ be two disjoint chain curves on $S_{0,n+1}$. Define a homeomorphism $F_{ij}:S_{0,n+1}\to S_{0,m}$ by
\begin{equation}
\label{eq:1}
   F_{ij} = \left\{
     \begin{array}{lr}
       f_i & \mbox{on}~~(S_{0,n+1}-\alpha_i)_0\\
       f_j & \mbox{on}~~(S_{0,n+1}-\alpha_j)_0\\
     \end{array}
   \right. .
\end{equation} 
Note that, by Lemma~\ref{lem:1} and~\ref{lem:3}, $i(Q_i,Q_j)=0$ and $Q_i\neq Q_j$.
We show that $f_i$ agrees with $f_j$ on $(S_{0,n+1}-\left\{\alpha_i,\alpha_j\right\})_0\cong S_{0,n-1}$. Consider the restrictions of $f_i$ and $f_j$ as embeddings on $(S_{0,n+1}-\left\{\alpha_i,\alpha_j\right\})_0$. Let $X^{ij}_{n-1}=X_{n+1}\cap \mathcal{P}_{\left\{\alpha_i,\alpha_j\right\}}(S_{0,n+1})\cong X_{n-1}$. Observe that 
\[
f^{Q_i}_i(v)=\phi(v)=f^{Q_j}_j(v),
\]
 for any vertex $v\in X^{ij}_{n-1}$. That is, both $f_i$ and $f_j$ induce the simplicial map $\phi_{ij}: X^{ij}_{n-1} \to \mathcal{P}(S_{0,m})$ defined by $\phi_{ij}(v)=\phi(v)$. Then the uniqueness statement in the induction hypothesis (which applies since $n-1 \geq 5$)  implies that $f_i$ agrees with $f_j$ on $(S_{0,n+1}-\left\{\alpha_i,\alpha_j\right\})_0$. 
\begin{lem}
\label{lem:4}
For any four chain curves $\alpha_i, \alpha_j, \alpha_k$, and $\alpha_l$ such that $i(\alpha_i,\alpha_j)=0=i(\alpha_k,\alpha_l)$, we have $F_{ij}=F_{kl}$. 
\end{lem}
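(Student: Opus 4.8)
The plan is to reduce the lemma to the case of three pairwise disjoint chain curves and then to propagate the resulting equalities across all disjoint pairs by a connectivity argument on chain curves.

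First I would establish the key local statement: if $\alpha_i,\alpha_j,\alpha_k$ are pairwise disjoint chain curves, then $F_{ij}=F_{jk}$. Let $P_j$ denote the pair-of-pants component of $S_{0,n+1}-\alpha_j$, so that $S_{0,n+1}=(S_{0,n+1}-\alpha_j)_0\cup P_j$, the two pieces meeting in a collar of $\alpha_j$. By definition, both $F_{ij}$ and $F_{jk}$ restrict to $f_j$ on $(S_{0,n+1}-\alpha_j)_0$, so it only remains to compare them on $P_j$. Since the three chain curves are pairwise disjoint, the three pairs of pants they cut off are pairwise disjoint, and hence $P_j$ lies inside both $(S_{0,n+1}-\alpha_i)_0$ and $(S_{0,n+1}-\alpha_k)_0$; consequently $F_{ij}|_{P_j}=f_i|_{P_j}$ and $F_{jk}|_{P_j}=f_k|_{P_j}$. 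Because $\alpha_i$ and $\alpha_k$ are disjoint chain curves, the agreement already established just before the statement of this lemma gives $f_i=f_k$ on $(S_{0,n+1}-\{\alpha_i,\alpha_k\})_0$, a subsurface containing $P_j$; thus $F_{ij}|_{P_j}=F_{jk}|_{P_j}$. As $F_{ij}$ and $F_{jk}$ now agree on both $(S_{0,n+1}-\alpha_j)_0$ and $P_j$, and these cover $S_{0,n+1}$ and overlap in a collar of $\alpha_j$ on which both equal $f_j$, they send a filling collection of curves to the same images, so $F_{ij}=F_{jk}$ up to isotopy.

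Next I would deduce the general statement from this local one combinatorially. Consider the graph $G$ whose vertices are the chain curves $\alpha_1,\dots,\alpha_{n+1}$ and whose edges join disjoint pairs; since two chain curves are disjoint exactly when their indices are non-consecutive mod $n+1$, the graph $G$ is the complement of the $(n+1)$-cycle. For $n+1\geq 7$, the two endpoints of any edge have at least $(n+1)-6\geq 1$ common neighbors, so any edge of $G$ can be carried to any other by a finite sequence of moves, each replacing one endpoint of the current edge by a vertex adjacent to both endpoints, i.e. completing a triangle of $G$. Every such move is a pairwise disjoint triple of chain curves, for which the local statement yields the equality of the two associated maps. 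Chaining these equalities along a sequence of moves joining $\{\alpha_i,\alpha_j\}$ to $\{\alpha_k,\alpha_l\}$ gives $F_{ij}=F_{kl}$.

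The main obstacle I anticipate is not the local comparison---which is made clean by using the third curve to pin down the restriction of each $F$ to the pants component $P_j$, thereby sidestepping any appeal to rigidity of maps of a pair of pants and any worry about a boundary twist or a swap of the two punctures of $P_j$---but rather verifying the triangle-connectivity of $G$ for all $n+1\geq 7$ and confirming that the hypothesis $n-1\geq 5$, needed to invoke the inductive uniqueness inside the already-established agreement of $f_i$ and $f_k$, is exactly what forces the separate treatment of the $n+1=6$ case. I would double-check the count of common neighbors and that the endpoint-replacement moves genuinely connect any two edges, treating the near-adjacent index configurations explicitly if they cause trouble.
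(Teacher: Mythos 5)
Your proposal is correct and takes essentially the same approach as the paper: the paper also reduces the lemma to the pairwise-disjoint-triple case $F_{ij}=F_{ik}$ via a connectivity argument on the same combinatorial structure (the paper's graph has disjoint pairs $\{i,j\}$ as vertices and your triangle moves as edges), and it derives the local equality from the agreement of $f_x$ and $f_y$ on $(S_{0,n+1}-\{\alpha_x,\alpha_y\})_0$ established just before the lemma by the inductive uniqueness. Your explicit patching over the pants component $P_j$ merely spells out the step the paper leaves implicit in its concluding ``Hence $F_{ij}=F_{ik}$.''
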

\begin{proof}
Consider a graph which has the set of vertices 
\[
V=\{\{i,j\}|\alpha_i, \alpha_j~\mbox{are disjoint chain curves on}~S_{0,n+1}\},
\] 
and the set of edges 
\[
E=\{\{\{i,j\},\{i,k\}\}|i(\alpha_j,\alpha_k)=0\}.
\] 
Fix any two vertices $\left\{i,j\right\}$ and $\left\{k,l\right\}$. Since $n+1\geq 7$, it is not hard to see that $\left\{i,j\right\}$ and $\left\{k,l\right\}$ are connected by a path of length at most $4$. Hence this graph is connected.

The Lemma now follows by proving that $F_{ij} = F_{ik}$ for $i(\alpha_i,\alpha_j) = 0 = i(\alpha_i,\alpha_k)$. We show that for any pair $(x,y), x\neq y\in \left\{i,j,k\right\}$, $f_x$ agrees with $f_y$ on $(S_{0,n+1}-\left\{\alpha_x,\alpha_y\right\})_0$. Since $(S_{0,n+1}-\left\{\alpha_x,\alpha_y\right\})_0$ has at least $5$ punctures and $f^{Q_x}_x(v)=f^{Q_y}_y(v)$ for any vertex $v\in X^{xy}_{n-1}$, the uniqueness statement in the induction hypothesis implies that $f_x$ agrees with $f_y$ on $(S_{0,n+1}-\left\{\alpha_x,\alpha_y\right\})_0$. Hence $F_{ij}=F_{ik}$ as desired.
\end{proof}
Finally, we show that for any $i,j$ with $i(\alpha_i,\alpha_j) = 0$, then $Q = Q_i \cap Q_j$ and  $f=F_{ij}:S_{0,n+1}\to S_{0,m}$ is the unique $\pi_1$-injective embedding that induces $\phi=f^Q$. 
To show that $f$ induces $\phi$, given a vertex $v\in X_{n+1}$. $v\in X^{pk}_{n-1}=X_{n+1}\cap \mathcal{P}_{\{\alpha_p,\alpha_k\}}(S_{0,n+1})$ for some disjoint chain curves $\alpha_p, \alpha_k$. Then $Q=Q_i \cap Q_j=Q_p \cap Q_k$ and
\[
f^Q(v)=F^Q_{pk}(v)=f^{Q_p}_p(v)=\phi(v),
\]
The first equality comes from Lemma~\ref{lem:4}, the second equality comes from equation~\ref{eq:1}, and the third equality comes from the inductive hypothesis.
Hence $f$ induces $\phi$.

For uniqueness, assume that there is a $\pi_1$-injective embedding $h:S_{0,n+1}\to S_{0,m}$ that induces $h^W=\phi$ for some deficiency-$n-2$ multicurve $W$ on $S_{0,m}$. 
Let $h_i$ and $h_j$ be restrictions of $h$ on $(S_{0,n+1}-\alpha_i)_0$ and $(S_{0,n+1}-\alpha_j)_0$, respectively. 
By assumption, $h_i$ induces the same simplicial map on $X^i_n$ as $f^{Q_i}_i$. 
Hence the uniqueness statement in the induction hypothesis implies that $f_i=h_i$ and $Q_i=W_i$. Similarly, we have $f_j=h_j$ and $Q_j=W_j$. Therefore $f=h$ and $Q=W$ as desired. 
\end{proof}

\end{document}